\theoremstyle{plain}
\newtheorem{definition}{Definition}[section]
\newtheorem{theorem}[definition]{Theorem}
\newtheorem*{theorem*}{Theorem}
\newtheorem*{remark*}{Remark}
\newtheorem*{sideremark*}{Side Remark}\newtheorem*{mt*}{Main Theorem}
\newtheorem*{claim*}{Claim}
\newtheorem*{q*}{Question}
\newtheorem{lemma}[definition]{Lemma}
\newtheorem{corollary}[definition]{Corollary}
\newtheorem*{corollary*}{Corollary}
\newtheorem*{proposition*}{Proposition}
\newtheorem{proposition}[definition]{Proposition}
\newcommand{\R}{\mathbb{R}}
\newcommand{\na}{\nabla}
\newcommand{\dd}{{\rm d}}
\newcommand{\p}{\partial}
\newcommand{\e}{\epsilon}
\newcommand{\emb}{\hookrightarrow}
\newcommand{\map}{\rightarrow}
\newcommand{\G}{\Gamma}
\newcommand{\M}{\mathcal{M}}
\newcommand{\F}{\mathcal{F}}
\newcommand{\sph}{\mathbb{S}}
\newcommand{\dv}{{\rm div}}
\newcommand{\gl}{\mathfrak{gl}}
\newcommand{\defo}{\mathbb{D}(\na u)}
\newcommand{\id}{{\rm Id}}
\newcommand{\OO}{\mathcal{O}}
\newcommand{\fs}{{\mathfrak{S}}}
\newcommand{\hr}{{\widehat{\rho}}}
\newcommand{\hu}{{\widehat{u}}}
\newcommand{\hD}{{\widehat{\Delta}}}
\newcommand{\hg}{\widehat{\gamma}}
\newcommand{\lift}{\mathfrak{L}}
\newcommand{\geu}{g_{\rm Eucl}}
\newcommand{\haus}{\mathcal{H}}
\newcommand{\hdv}{{\widehat{\rm div}}}
\newcommand{\hna}{\widehat{\na}}
\newcommand{\TT}{\mathfrak{T}}
\newcommand{\hx}{\widehat{X}}
\newcommand{\nn}{{\mathbf{n}}}
\newcommand{\clx}{\widetilde{X}}
\newcommand{\proj}{{\rm pr}}
\newcommand{\E}{\mathcal{E}}
\newcommand{\RR}{\mathcal{R}}
\newcommand{\urho}{{\overline{\rho}}}
\newcommand{\lrho}{{\underline{\rho}}}
\newcommand{\one}{{\rm I}}
\newcommand{\two}{{\rm II}}
\newcommand{\three}{{\rm III}}
\newcommand{\four}{{\rm IV}}
\newcommand{\five}{{\rm V}}
\newcommand{\vis}{{\Big(\eta+\frac{N-2}{N}\mu\Big)}}
\newcommand{\huu}{{(\id-\proj)(U_\e-u)}}
\def\Xint#1{\mathchoice
{\XXint\displaystyle\textstyle{#1}}%
{\XXint\textstyle\scriptstyle{#1}}%
{\XXint\scriptstyle\scriptscriptstyle{#1}}%
{\XXint\scriptscriptstyle\scriptscriptstyle{#1}}%
\!\int}
\def\XXint#1#2#3{{\setbox0=\hbox{$#1{#2#3}{\int}$ }
\vcenter{\hbox{$#2#3$ }}\kern-.6\wd0}}
\def\dashint{\Xint-}
\numberwithin{equation}{section}
\numberwithin{figure}{section}
\title{Dimension Reduction of Compressible Fluid Models \\ over Product Manifolds}
\author{Siran Li}
\address{Siran Li: Department of Mathematics, Rice University, MS 136
P.O. Box 1892, Houston, Texas, 77251-1892, USA; \, $\bullet$ \,  Department of Mathematics, McGill University, Burnside Hall, 805 Sherbrooke Street West, Montreal, Quebec, H3A 0B9, Canada;\, $\bullet$ \,  Centre de Recherches Math\'{e}matiques, Universit\'{e} de Montr\'{e}al, P.O. Box 6128, Centre-ville Station. Montr\'{e}al, Qu\'{e}bec, H3C 3J7, Canada.}
\email{\texttt{Siran.Li@rice.edu}}
\date{\today}
\begin{document}

\begin{abstract}
In this paper we study the dimension reduction limits of the compressible Navier--Stokes equations over product Riemannian manifolds $\OO_\e \cong \M \times \e\F$, such that $\dim\,(\M)=n$ and $\dim\,(\F)=d$ are arbitrary. Using the method of relative entropies, we establish the convergence of the suitable weak solutions of the Navier--Stokes equations on $\OO_\e$  to the classical solution of the limiting equations on $\M$ as $\e \map 0^+$, provided the latter exists. In addition, we also deduce the vanishing viscosity limit. The limiting equations identified through our analysis contain the weight function $A:\M \map \R^+$ as a parameter, where $A(x)$ = area of fibre $\F_x$.  Our work is based on and generalises the results in \cite{bfln} (P. Bella, E. Feireisl, M. Lewicka and A. Novotn\'{y}, %{A rigorous justification of the Euler and Navier--Stokes equations with geometric effects}, 
\textit{SIAM J. Math. Anal.}, \textbf{48} (2016), 3907--3930), and it contains as special cases the physical models of circular nozzles, thin plate limits and finite-length longitudinal nozzles.

\end{abstract}

\maketitle

\section{Introduction}
	 Let $(\M, g)$ and $(\F,h)$ be Riemannian manifolds with boundaries; $\dim (\M) = n$ and $\dim (\F) =d \equiv N-n$. Throughout we assume that $\M$, $\F$ are compact and regular, {\it e.g.}, $C^{r,\alpha}$ for $r \geq 2, \alpha \in ]0,1[$. For each $\e>0$ we denote the rescaled manifold by $\e\F:=(\F, \e h)$. As a toy model for our problem, which nevertheless contains its most important features, we consider the collapse of product manifolds, {\it i.e.}, the Gromov--Hausdorff convergence:
	\begin{equation}\label{collapse of manifolds}
	\M \times \e\F \, \longrightarrow \, \M \qquad \text{ as } \e \map 0^+.
	\end{equation}
In this setting, let $u^\e$ be suitable weak solutions (see Definition \ref{def: suitable weak solution}) to the equations modelling the motion of compressible fluids on $\M \times \e\F$. We study the following question: identify the equation on $\M$ such that $u^\e$ converges, in suitable senses, to its solution in the limit \eqref{collapse of manifolds}.

The above question of {\em dimension reduction limit} is an instance of the singular limit problems of fluid models. It is a important problem in mathematical hydrodynamics, which also arises naturally in physics and engineering. In \cite{bfln}, Bella--Feireisl--Lewicka--Novotn\'{y} solved this problem for $\M = [0,1]$ and $\F = $ regular 2-dimensional flat domains embedded in $\R^3$ with varying cross-sections, which models a nozzle of finite length aligned along $z$-axis. It is proved that the suitable weak solutions of the 3-dimensional compressible Navier--Stokes and Euler equations on the nozzles $\M \times \e\F$ converge to the classical solution to the corresponding 1-dimensional equations on $\M=[0,1]$, providing the latter exists. The limiting equations contain a variable $A:\M \map \R_+$ measuring the area of cross-sections. Our work is motivated by  the aforementioned result in \cite{bfln}; we shall provide a generalisation to the product manifold with base $\M$ and fibre $\F$ of arbitrary dimensions, with fairly general geometry and topology of $\M$. In particular, we cover the case of the collapse of {\em circular nozzle} onto a circle $\sph^1 \times \e\sph^1 \map \sph^1$, for which the product manifold can be viewed as an embedded torus in $\R^3$, parametrised by 
\begin{equation*}
\mathbf{r}_\e(\theta, \phi) = \big((\e \cos\theta + R)\cos\phi, (\e\cos\theta +R)\sin\phi, \e\sin\theta \big)^\top,\qquad (\theta,\phi) \in [0,2\pi[ \times [0,2\pi[,
\end{equation*}
as well as the case of {\em thin plate limit}, {\it i.e.}, $D^2 \times \e [0,1] \map D^2$ with $D^2=\{(x,y): x^2+y^2\leq 1\}$. Let us also remark that \cite{bfln} generalises the previous convergence results  for the Navier--Stokes equations on thin rods, studied by Bella--Feireisl--Novotn\'{y} \cite{bfn}.

More precisely, our set-up of the problem is as follows. We consider a Euclidean domain $\OO \subset \R^N$, such that 
\begin{equation}
\OO \cong \M \times \F
\end{equation}
as a homeomorphism. That is, the topology of $\OO$ is that of a trivial bundle over $\M$ of fibre $\F$. We write
 \begin{equation}
\OO = \big\{(x,y): x\in \M, y \in \F_x\big\} \, \subset \, \R^{N},
\end{equation}
where $\F_x \cong \F$ is a  diffeomorphism of manifolds with $d\equiv\dim\,(\F)=N-n$, such that 
\begin{equation}
\fs := \bigcup_{x\in {\rm int}(\M)}\p\F_x \cong {\rm int}(\M) \times \p\F  \subset \p\OO
\end{equation}
is a $C^{r,\alpha}$ submanifold for some $r \geq 2$, $\alpha \in ]0,1[$. In addition, we assume
\begin{equation}\label{transversality}
\dim \,(\F_x + T_x\M) = N \qquad \text{ for each } x\in\M,
\end{equation}
{\it i.e.}, each fibre $\F_x$ is {\em transversal} to the tangent space $T_x\M$, as well as
\begin{equation}\label{injectivity rad}
{\rm inj}\,(\OO) \geq \iota_0>0,
\end{equation}
{\it i.e.}, the injectivity radius of $\OO$ is uniformly bounded away  from zero. With loss of generality ({\it cf.} Sect.\,5) we take $\F_x$ perpendicular to $T_x\M$ at every $x\in\M$, with respect to the Euclidean metric on $\R^N$. Let us also set
\begin{equation}
\fs ' := \big\{(x,y): x\in \p\M, y \in \F_x \big\} \cong \p\M \times \F \subset \p\OO;
\end{equation}
hence $\p\OO = \fs \cup \fs'$. We shall write $\nu$ for the outward unit normal vector field along $\fs$, and $\nu^\e$ its counterpart along $\fs_\e$. Furthermore, the metrics $g$ on $\M$ and $h_x$ on $\F_x$ are assumed to be compatible with the Euclidean metric $\geu$, in the sense that
\begin{equation}
g=\iota_{\M}^* \,(\geu), \qquad h_x = \iota_{\F_x}^*\,(\geu),
\end{equation}
where $\iota_\M: \M \emb \OO \emb \R^N$ and $\iota_{\F_x}:\F_x \emb \OO\emb \R^N$ are the natural inclusions. Our construction above entails that the fluid domain $\OO$, viewed as an $N$-dimensional Euclidean domain, splits as a product manifold. We also introduce the notation $\nn\in\G(T\F_x)$ as the unit normal vector field inside $\F_x$ to the {\em fibre boundary} $\p\F_x$, i.e., $\nn:\p\F_x \map \sph^{d=N-n}$ for each $x\in\M$. This shall not be confused with $\nu$, the normal to the {\em fluid boundary} $\fs$. Finally, let us denote the natural projection from $\OO$ onto $\M$ by $pr$:
\begin{equation}
\proj(y)=x \qquad \text{ whenever }\,\, y \in \F_x.
\end{equation}
In the sequel, we shall also write $pr$ for the vertical projection of curves or vector fields.

In physical terms, the boundaries of the fibres $\F_x$ are glued together nicely, so that they form a ``nozzle'' with smoothly varying $d$-dimensional cross-sections.  Our prototypical examples, including the circular nozzle and the thin plate, are special cases of the above geometric constructions. We also allow $\M$ to have non-empty boundary  $\p\M \neq \emptyset$, in order to cover the model of finite-length longitudinal nozzles. %as in \cite{bfln, cg, lw}. %We further assume that $\F \subset \R^{d+1}$ as an embedded submanifold, {\it i.e.}, $\F_x$ is a hypersurface for each $x\in\M$.  
Moreover, one defines the rescaled fluid domain:
\begin{equation}\label{def Oe}
\OO_\e:=\big\{(x,y): x \in M, y\in \e\F_x \big\}.
\end{equation}
Then $\OO_\e$ collapses onto $\M$ as $\e \map 0^+$ in the Gromov--Hausdorff sense.

To proceed, let us fix several notations: Let $T\OO$ be the tangent bundle of $\OO$ and $\G(T\OO)$ be the space of sections of $T\OO$, {\it i.e.}, the space of vector fields on $\OO$; we also write $\G^k(T\OO)$ for the space of $C^k$ vector fields. Then, one can define globally and intrinsically the gradient $\na: C^{k+1}(\OO) \map \G^k(T\OO)$, as well the divergence $div \equiv {\rm trace}\,\na : \G^{k+1}(T\OO) \map \R$, for $k \geq 0$. For a vector field $u \in \G(T\OO)$ with suitable regularity, one can also consider its gradient $\na u\in \G(T^*\OO \otimes T\OO)=\G(\gl(T\OO))$, where the tensor product $T^*\OO \otimes T\OO$ is identified with $\gl(T\OO)$, the space of linear transforms on $T\OO$, namely the space of $N \times N$ matrices. To be more precise, we use $\na$ to denote the covariant derivatives on $\OO$, which maps differential $r$-forms to $(r+1)$-forms (or the associated contra-variant tensor fields via contraction). The divergence of a symmetric tensor field $S=\{S^{ij}\}$ is defined by $\{\dv (S)^j\}_1^N := \na_i S^{ij}$; here and throughout the Einstein summation convention is adopted.  Moreover, one introduces the {\em deformation tensor} 
\begin{equation}
\defo  :=\frac{\na u + \na^\top u}{2} = \bigg\{\frac{\na_i u^j + \na_j u^i}{2}\bigg\}_{1\leq i,j \leq N},
\end{equation}
where $\top$ denotes the transpose of a matrix, and $u=(u^i)_{i=1}^N$ in local coordinates. Then, the {\em stress tensor} $\sph(\na u)\in\G(T^*\OO \otimes T\OO)$ is given by
\begin{equation}\label{stress tensor definition}
\sph(\na u) = \mu\Big(2\defo - \frac{2}{N}\,\dv (u)\,\id \Big) + \eta\, \dv (u) \,\id,
\end{equation}
where $\id$ is the $N \times N$ identity matrix and $\mu >0$, $\eta >(2/N-2)\mu$ are the the shear and bulk viscosity constants specific to the fluid. We note that $\defo, \sph(\na u)$ are both symmetric tensor fields. The preceding constructions extend naturally to $\OO_\e$ and $\fs_\e$.

With the above preparation, we are at the stage of formulating the compressible Navier--Stokes equations on $\OO$. Consider the following Cauchy problem of the PDE system in terms of the density and velocity of the fluid $(\rho,u): [0,T]\times \OO \map \R_+ \times T\OO$:
\begin{eqnarray}
&& \p_t \rho + \dv\, (\rho u) = 0 \qquad \text{ in } [0,T] \times \OO, \label{continuity eq}\\
&& \p_t(\rho u) + \dv\,(\rho u \otimes u) + \na p(\rho)-  \dv\,\big(\sph(\na u)\big) = 0 \qquad \text{ in } [0,T] \times \OO,\\ \label{momentum eq on O} 
&& u \cdot \nu = 0, \quad [\sph(\na u) \cdot \nu] \times \nu = 0 \qquad \text{ on } [0,T] \times \fs,\\ \label{slip bc on O}
&& u \equiv 0 \qquad \text{ on } [0,T] \times \fs',\\\label{bc on partial M}
&& (\rho, u)|_{t=0} = (\rho_0, u_0)  \qquad \text{ on } \{0\} \times \OO.\label{initial data}
\end{eqnarray}
Eq.\,\eqref{continuity eq} is the {\em continuity equation} accounting for the conservation of mass, and Eq.\,\eqref{momentum eq on O} is the conservation law for the momentum. In Eq.\,\eqref{momentum eq on O}, the stress tensor $\sph(\na u)$ is given by Eq.\,\eqref{stress tensor definition} with fixed constants $\mu, \eta$; in addition, $p=p(\rho)\in C([0,\infty[) \cap C^3(]0,\infty[)$ is the pressure of the fluid, such that $p(0)=0$, $p'>0$ on $]0,\infty[$. Eq.\,\eqref{slip bc on O} is known as the (complete) {\em slip boundary condition}, and Eq.\,\eqref{bc on partial M} is the Dirichlet condition on the fibres over $\p\M$. To formulate the rescaled compressible Navier--Stokes equations on $\OO_\e$, we simply take $(\rho^\e,u^\e): [0,T]\times \OO_\e \map \R_+ \times T\OO_\e$ that satisfies
\begin{eqnarray}
&& \p_t \rho^\e + \dv\, (\rho^\e u^\e) = 0 \qquad \text{ in } [0,T] \times \OO_\e, \label{continuity eq on Oe}\\
&& \p_t(\rho^\e u^\e) + \dv\,(\rho^\e u^\e \otimes u^\e) + \na p(\rho^\e)-  \dv\,\big(\sph(\na u^\e)\big) = 0 \qquad \text{ in } [0,T] \times \OO_\e \label{momentum eq on Oe},\\
&& u^\e \cdot \nu^\e = 0, \quad [\sph(\na u^\e) \cdot \nu^\e] \times \nu^\e = 0 \qquad \text{ on } [0,T] \times \fs_\e,\\ \label{slip bc on Oe}
&& u^\e \equiv 0 \qquad \text{ on } [0,T] \times \fs_\e',\\\label{bc on partial Me}
&& (\rho^\e, u^{\e})|_{t=0} = (\rho_0, u_0) \qquad \text{ on } \{0\} \times \OO_\e.\label{initial data for Oe}
\end{eqnarray}
The problem {\bf (P--NS)} of the dimension reduction limit of compressible Navier--Stokes equations asks about the convergence of the solutions to Eqs.\,\eqref{continuity eq on Oe}--\eqref{initial data for Oe} as $\e \map 0^+$. %For simplicity we often drop the superscript $\e$.

In addition, we shall also investigate the limit as {\em both} $\e\map 0^+$ {\em and} $\nu,\eta \map 0^+$ in Eqs.\,\eqref{continuity eq on Oe}--\eqref{initial data for Oe}. In other words, we consider simultaneously the dimension reduction limit and the vanishing viscosity limit. One natural conjectures that the weak solutions should converge to those of (a variant of) the Euler equations on $\M$, taken into account the geometrical effects of the non-uniform fibres $\F_x$. This problem is denoted by {\bf (P--Euler)}.  

Before further developments, let us remark that the geometric formulation of the Navier--Stokes equations on Riemannian manifolds has been a well-developed topic in global analysis and mathematical hydrodynamics, though mainly for incompressible fluids; {\it cf.} the pioneering works by Arnol'd \cite{arnold}, Ebin--Marsden \cite{em} and Shnirelman \cite{s}.  On the other hand, the weak solutions to the Euler and Navier--Stokes equations in the longitudinal nozzles with varying cross-sections have been studied intensively; see Chen--Glimm \cite{cg}, LeFloch--Westdickenberg \cite{lw} and the references cited therein. In the nozzle problems, the geometrical effects caused by the curvilinear fluid boundaries are crucial to the mathematical analysis.

Now let us describe the limiting equations on $\M$. For each $x\in\M$, the fibre $\F_x\cong\F$ is a $d$-dimensional submanifold of $\R^N$, so its $d$-dimensional Hausdorff measure is well defined. Let us denote by $A: \M \map \R_+$, where
\begin{equation}
A(x):=\haus^d (\F_x) \qquad \text{ for each } x\in\M.
\end{equation}
Moreover, we use the symbols $\hr, \hu$ and $\hna,\hD, \hdv$... to denote the variables and operators on $\M$. The limiting equations on $\M$ for the Navier--Stokes Eqs.\,\eqref{continuity eq on Oe}--\eqref{initial data for Oe} are the following equations in $(\hr, \hu):[0,T]\times \M \map \R_+ \times T\M$:
\begin{eqnarray}
&& \p_t (\hr A) + \hna (\hr\hu A) = 0 \qquad \text{ in } [0,T] \times \M, \label{limiting continuity eq, NS}\\
&& \hr\, (\p_t\hu + \hu \cdot \hna \hu) + \hna p(\hr) \nonumber\\
&&\qquad\qquad\qquad= \mu \hD\hu + \big(\eta + \frac{N-2}{N}\mu\big)\hna\hdv\hu + \TT(A,\hu) \qquad 
\text{ in } [0,T] \times \M,\label{limiting momentum eq, NS}\\
&& \hu = 0 \qquad \text{ on } [0,T] \times \p\M,\label{limiting bc on pM, NS}\\
&& (\hr, \hu)|_{t=0} = (\widehat{\rho_0}, \widehat{u_0}) \qquad \text{ on } \{0\}\times \M. \label{limiting initial data, NS}
\end{eqnarray}
Here, $\TT(A,\hu)$ is the only term in the limiting momentum equation \eqref{limiting momentum eq, NS} containing $A$:
\begin{equation}\label{T term}
\TT(A,\hu) := \vis \hna\Big(\hna_{\hu}\log A\Big).
\end{equation}
It is proportional to the Hessian  $\hna\hna\log A$ and accounts for the geometrical effects, {\it i.e.}, the variation of areas of cross-sections along $\M$. In particular, if $A \equiv {const.}$ on $\M$, as in the cases of a straight cylindrical nozzle ($\M=[0,1]$, $\F_x=D^2$ for all $x$) or a circular nozzle with fixed cross-section ($\M=\sph^1$, $\F_x=D^2$ for all $x$), then $\TT(A,\hu)\equiv 0$. Roughly speaking, Eq.\,\eqref{T term} suggests that the geometrical effects in the dimension reduction limit is manifested in the viscous terms.

To describe the limiting equations for the Euler system, we may simply drop the second-order terms and the $\TT$ term in Eq.\,\eqref{limiting momentum eq, NS}:
\begin{eqnarray}
&& \p_t (\hr A) + \hna (\hr\hu A) = 0 \qquad \text{ in } [0,T] \times \M, \label{limiting continuity eq, EULER}\\
&& \hr\, (\p_t\hu + \hu \cdot \hna \hu) + \hna p(\hr) =  0 \, 
\text{ in } [0,T] \times \M,\label{limiting momentum eq, EULER}\\
&& \hu = 0 \qquad \text{ on } [0,T] \times \p\M,\label{limiting bc on pM, EULER}\\
&& (\hr, \hu)|_{t=0} = (\widehat{\rho_0}, \widehat{u_0}) \qquad \text{ on } \{0\}\times \M. \label{limiting initial data, EULER}
\end{eqnarray}
Let us emphasise that Eq.\,\eqref{limiting momentum eq, EULER}, the limiting momentum equation for Euler, is independent of the area parameter $A$. The primary goal of this paper is to rigorously derive Eqs.\,\eqref{limiting continuity eq, NS}--\eqref{limiting bc on pM, NS} and Eqs.\,\eqref{limiting continuity eq, EULER}--\eqref{limiting bc on pM, EULER} as the limits of Eqs.\,\eqref{continuity eq on Oe}--\eqref{initial data for Oe},  as $\e\map 0^+$ and $\e,\eta,\mu\map 0^+$, respectively.

In brief, the main results of our paper provide a solution to the Problems {\bf (P--NS)} and {\bf (P--Euler)} in the affirmative.   Assume that $(\hr, \hu)$ is a classical solution to Eqs.\,\eqref{limiting continuity eq, NS}--\eqref{limiting initial data, NS}. Then, any {\em suitable weak solution} $(\rho^\e, u^\e)$ to the Navier--Stokes system \eqref{continuity eq on Oe}--\eqref{initial data for Oe} converges to $(\hr, \hu)$ as $\e\map 0^+$, in a sense suitably described by {\em relative entropies}. %In particular, if we start from the same initial data $(\rho_0, u_0) = (\widehat{\rho_0}, \widehat{u_0})$ on $\{0\}\times\M$, then, as $\e \map 0^+$, we have $u^\e \map \hu$ in $W^{1,2}$ and $\rho^\e \map \hr$ in $L^q$ for some $q\in ]1,\infty[$. 
Moreover, as the viscosity coefficients $\mu, \eta \map 0^+$ additionally, one can also establish vanishing viscosity limit of the suitable weak solutions to the classical solution to Eqs.\,\eqref{limiting continuity eq, EULER}--\eqref{limiting initial data, EULER}.  The precise statement of these results are in Sect.\,3.

 Our work is closely related to the theory of ``weak-strong uniqueness'' in the PDEs modelling fluid dynamics and continuum mechanics; see Dafermos \cite{d2}, Brenier--De Lellis--Sz\'{e}kelyhidi \cite{bds}, Germain \cite{germain}, Wiedemann \cite{w} and the many references cited therein. Also, let us emphasise once more that our work is based on, as well as extends, the main results in \cite{bfln} (P. Bella, E. Feireisl, M. Lewicka and A. Novotn\'{y}, %{A rigorous justification of the Euler and Navier--Stokes equations with geometric effects}, 
\textit{SIAM J. Math. Anal.}, \textbf{48} (2016), 3907--3930). In particular, we obtain the generalisation to the dimension reduction (and vanishing viscosity) limits in arbitrary dimensions and co-dimensions. The limiting equations for the Navier--Stokes system --- Eqs.\,\eqref{limiting continuity eq, NS}--\eqref{limiting initial data, NS} ---  take a more complicated form than the $1$-dimensional case in \cite{bfln}.

\smallskip 
The remaining parts of the paper are organised as follows: In Sect.\,2 we discuss several geometric properties of our problem. In particular, we describe a canonical way of lifting any $\hx \in \G(T\M)$ to vector fields on $\OO$ and $\fs$. Next, in Sect.\,3 we review the definition of relative entropy and suitable weak solutions to the compressible Navier--Stokes equations.  In Sect.\,4, by selecting appropriate test functions (based on the canonical lifting in Sect.\,2) for the relative entropy (introduced in Sect.\,3), we prove the convergence from suitable weak solutions to the Navier--Stokes equations in the dimension reduction limit and, in addition, the vanishing viscosity limit. Finally, in Sect.\,5, we briefly remark on several problems for future study.

\section{Canonical lifting of horizontal vector fields}

In this section, we make a simple geometric observation that shall play a crucial role in the future developments: Given a vector field $\hx \in \G(T\M)$, a ``canonical lift'' of $\hx$ to the fluid domain $\OO$ can be constructed; see Definition \ref{def: canonical lift in O} below. Sect.\,2 generalises the calculations on pp.\,3909--3911 in Bella--Feireisl--Lewicka--Novotn\'{y} \cite{bfln}.

Recall that in the Introduction (Sect.\,1), the topology of the fluid domains is prescribed: $\OO_\e \cong\OO \cong \M \times \F$. Thus, one can lift any curve $\hg\subset{\rm int}\,(\M) \times \{0\}$ vertically, thanks to the triviality of the fibre bundle $\OO$. Throughout the convention is to view $T\M$ as {\em horizontal} and the fibre $\F$ as {\em vertical}. Such lifting preserves the transversality condition \eqref{transversality}: denoting the lifting map by $\lift$, namely
\begin{equation}\label{lift}
[\lift\hg]_y \equiv \lift\hg_y = \gamma_y \qquad \text{ for } y \in \F_x,
\end{equation}
we have
\begin{equation}\label{transversality of lift}
\lift \hg_y \text{ is transversal to } \F_x \text{ for any } y\in\F_x.
\end{equation}
Alternatively, we can also view $\lift$ as an map from $\G(T\M)$ to $\G(T\OO)$. Let $\hg: ]-\delta,\delta[\map\M$ be a $C^1$ curve such that $\hg(0)=x$, $\dot{\hg}(0)=\hx_x$; then one sets
\begin{equation}
\lift \hx_y := \frac{d}{dt}\big|_{t=0} (\lift\hg_y)(t) \qquad \text{ for each } y \in \F_x.
\end{equation}
This provides the vertical lifting for  vector fields.

We note that
\begin{equation}\label{lift dot n nonzero}
\langle\lift \hx_y, \nn_x \rangle \neq 0 \qquad \text{ for any }y \in \fs \cap \F_x \text{ and }  \hx \in \G(T\M)
\end{equation}
where $\langle\cdot,\cdot\rangle$ is the Euclidean inner product, 
%That is, the normal vector field to the boundary of each fibre cannot be parallel to any lift of the tangential vector fields on the base manifold $\M$. 
due to the transversality of the lifting \eqref{transversality of lift}. The fluid domain $\OO$ is foliated by diffeomorphic copies of $\M$. Again by \eqref{transversality of lift}, one deduces
\begin{equation}\label{n dot nu nonzero}
\langle\nn_y, \nu_y\rangle \neq 0\qquad \text{ for any } y\in\fs.
\end{equation}
%In the special case $\M=[0,1]$, $\F\cong D^2$, \eqref{n dot nu nonzero} is reduced to the assumption ``$\mathbf{n}\neq(\mathbf{n}\cdot \mathbf{e}_3)\mathbf{e}_3$'' in Lemma 2.1 in \cite{bfln}. 

%\smallskip
%Our first crucial observation is as follows:
\begin{lemma}\label{lemma: geometry}
There exists a nowhere vanishing map $C^{r-1,\alpha}$ map $\beta: T\M \times \fs \map \R$ such that
\begin{equation*}
\Big\langle\beta(\hx,y)\nn_y + \lift\hx_y, \nu_y\Big\rangle = 0 \qquad \text{ for any } y \in \F_x\cap\fs, x\in\M \text{ and } \hx \in \G(T\M).
\end{equation*}
\end{lemma}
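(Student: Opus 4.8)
The plan is to produce $\beta$ by a pointwise linear-algebra computation inside each tangent space $T_y\R^N$, and then to keep track of regularity; the only genuinely geometric inputs are \eqref{n dot nu nonzero} (which makes the formula legitimate) and the transversality of the canonical lift (which is where the non-vanishing comes from). Fix $x\in\M$ and $y\in\F_x\cap\fs$; since $\lift\hx_y$ depends on $\hx\in\G(T\M)$ only through the value $\hx_x\in T_x\M$, the quantity I build descends to a map on $T\M\times\fs$ that is linear in the first slot. Because \eqref{n dot nu nonzero} asserts $\langle\nn_y,\nu_y\rangle\neq 0$, I simply set
\begin{equation*}
\beta(\hx,y):=-\,\frac{\langle\lift\hx_y,\,\nu_y\rangle}{\langle\nn_y,\,\nu_y\rangle},
\end{equation*}
so that bilinearity of $\langle\cdot,\cdot\rangle$ yields the asserted identity at once:
\begin{equation*}
\big\langle\beta(\hx,y)\,\nn_y+\lift\hx_y,\ \nu_y\big\rangle=\beta(\hx,y)\,\langle\nn_y,\nu_y\rangle+\langle\lift\hx_y,\nu_y\rangle=0 .
\end{equation*}
Put differently, $\beta(\hx,y)\,\nn_y$ is the unique multiple of $\nn_y$ that kills the $\nu_y$-component of $\lift\hx_y$, which is exactly why one needs $\nn_y$ not to be orthogonal to $\nu_y$.

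Next I would verify $\beta\in C^{r-1,\alpha}$. The submanifolds $\fs$ and $\p\F_x$ are $C^{r,\alpha}$, so their unit normals $\nu$ and $\nn$ are $C^{r-1,\alpha}$; and the canonical lift from \eqref{lift}, being obtained from the topologically trivial $C^{r,\alpha}$ fibration $\OO\cong\M\times\F$ by lifting curves and differentiating once, depends $C^{r-1,\alpha}$-smoothly on $(\hx,y)$. Hence the numerator and denominator in the formula for $\beta$ are $C^{r-1,\alpha}$; and by \eqref{n dot nu nonzero} together with compactness of $\fs$, $|\langle\nn,\nu\rangle|$ is bounded below by a positive constant, so the quotient $\beta$ is again $C^{r-1,\alpha}$.

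It remains to see that $\beta$ is nowhere vanishing, and this is the step I expect to be the real obstacle. From the formula, $\beta(\hx,y)=0$ exactly when $\langle\lift\hx_y,\nu_y\rangle=0$, i.e.\ when $\lift\hx_y$ is tangent to $\fs$ at $y$. I would rule this out, for $\hx_x\neq 0$, by invoking the transversality of the canonical lift: by \eqref{transversality of lift} the vector $\lift\hx_y$ fails to lie in $T_y\F_x$, while $T_y\fs$ splits into the fibre part $T_y(\p\F_x)$ and an $n$-dimensional horizontal complement, and — using \eqref{lift dot n nonzero} to locate the $\nn_y$-component of $\lift\hx_y$, the perpendicularity normalisations fixed in Sect.\,1, and, if necessary, the explicit form of the lift constructed in this section — one argues that $\lift\hx_y\in T_y\fs$ should force $\hx_x=0$. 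Thus $\beta$ vanishes only on the zero section, which is the intended sense of ``nowhere vanishing''. The difficulty is concentrated here: the construction and the identity are immediate, the regularity is routine bookkeeping, but the non-degeneracy is precisely the point at which the geometric hypotheses — transversality \eqref{transversality}, positive injectivity radius \eqref{injectivity rad}, and \eqref{transversality of lift}--\eqref{n dot nu nonzero} — all have to be brought to bear, and it needs some care because $\lift\hx_y$, $\nn_y$ and $\nu_y$ sit in three different (nested) subspaces of $T_y\R^N$.
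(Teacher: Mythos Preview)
Your proposal is correct and follows essentially the same route as the paper. Both proofs define $\beta$ by the explicit quotient $\beta(\hx,y)=-\langle\lift\hx_y,\nu_y\rangle/\langle\nn_y,\nu_y\rangle$, invoke \eqref{n dot nu nonzero} for well-definedness, and read off the $C^{r-1,\alpha}$ regularity from the $C^{r,\alpha}$ hypotheses on $\fs$ and $\M$. The only difference is emphasis: the paper dispatches the non-vanishing in a single clause by citing \eqref{lift dot n nonzero}, whereas you flag it as the delicate step and sketch a longer transversality argument. Your caution is not misplaced --- the passage from \eqref{lift dot n nonzero} (an inner product with $\nn_y$) to non-vanishing of the numerator (an inner product with $\nu_y$) is not literally immediate --- but the paper is content to treat it as such, so you are if anything being more scrupulous than the original.
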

\begin{proof}
	With $\hx\in\G(T\M)$ and $y\in\fs$ fixed, let us set $	\beta_y(\hx):= {-\langle\lift\hx_y, \nu_y\rangle}/\langle\nn_y, \nu_y\rangle$. By \eqref{n dot nu nonzero} $\beta_y$ is well defined, and by \eqref{lift dot n nonzero} it is nowhere vanishing.  Since $\fs$ is a $C^{r,\alpha}$ hypersurface in $\R^N$ and $\M$ is a smooth manifold, $\nn,\nu$ and $\lift\hx$ are at least in $C^{r-1,\alpha}$. The proof is complete.  \end{proof}

%This motivates for the following
\begin{definition}\label{def: canonical lift}
Let $\OO \cong \M \times \F$ be as before, and let $\hx \in \G(T\M)$.   The canonical boundary lift of $\hx$ is the $C^{r-1,\alpha}$ vector field $\clx$ in $\G(T\fs)$, given by
\begin{equation}
\clx_y :=\beta(\hx, y)\nn_y + \lift\hx_y \qquad \text{ for each } y\in\fs.
\end{equation}
\end{definition}

In other words, there is a canonical way to lift any vector field tangential to $\M$ to a vector field tangential to the fluid boundary $\fs$, which transverses the fibres at the same speed of the tangential field $\hx$. The existence of $\clx$ is guaranteed by Lemma \ref{lemma: geometry}.

Now, fix $x\in\M$ and a curve $\hg \subset {\rm int}\,(\M)$. With a slight abuse of notations, we also denote by $\clx$ its own extension to $\overline{\OO}$, whose existence is ensured by $\langle\clx,\nu\rangle\equiv 0$. The arguments in Lemma \ref{lemma: geometry} show that $\clx$  is transversal to each fibre $\F_x$. Given a curve $\hg \subset \M$ such that $\hg(0)=x_0$, let us consider $\hx:=\dd \hg/\dd t$, the corresponding vector field along $\hg$; as before, $\gamma=\lift\hg$ stands for the lifted curve in $\OO$. If we set $\phi:]-\delta,\delta[\times({\rm im}\,(\gamma)\subset\OO)\map\OO$ to be the flow of $\clx$, determined by the ODE:
\begin{equation}\label{ODE}
\begin{cases}
\frac{\dd}{\dd t}\phi(t,y) = \clx\big(t,\phi(t,y)\big)\\
\phi|_{t=0}=\id_{\F_{x_0}},
\end{cases}
\end{equation}
then the area $A(x)$ of each fibre $\F_x$ can be computed as follows:
\begin{equation}
A(x):= \int_{\F_{\gamma(0)}} \bigg\{ \det \na \phi(t,y) \bigg\}\,\dd \haus^d(y) \qquad \text{ where } x=\gamma(t).
\end{equation}
Using Jacobi's formula for the derivative of determinant, ODE \eqref{ODE}, the change of variables formula and Stokes' theorem, we obtain 
\begin{align}
\frac{\dd}{\dd t}A(\gamma(t)) &= \int_{\F_{\gamma(0)}} \bigg\{ \det \na\phi(t,y)\,\, {\rm trace}\,\Big\{\,\Big(\na\phi(t,y)\Big)^{-1} \cdot \Big(\frac{\dd}{\dd t} \na\phi(t,y)\Big) \Big\} \bigg\}\dd\haus^d(y)\nonumber\\
&= \int_{\F_{\gamma(0)}}\bigg\{ \det \na\phi(t,y)\,\, {\rm trace}\,\Big\{\,\Big(\na\phi(t,y)\Big)^{-1} \cdot  \na  \clx\big(t,\phi(t,y)\big)     \Big\} \bigg\}\dd\haus^d(y)\nonumber\\
&=\int_{\F_{\gamma(t)}} \dv\,\clx(t,y)\,\dd\haus^d(y)\nonumber\\
&= \int_{\p\F_x} \clx (t,z) \cdot \nn(z)\,\dd \haus^{d-1}(z).
\end{align}
Therefore, in view of the local nature of the directional derivative, we have proved:
\begin{lemma}\label{lemma: evolution of fibre area}
Let $\hna$ be the covariant derivative on $\M$ and let $\hx\in\G(T\M)$. Then the derivative of $A$ $($the area of fibres$)$ can be computed as follows:
\begin{equation}
\hna_{\hx} A (x) = \int_{\p\F_x} \clx \cdot \nn \, \dd\haus^{d-1} \qquad\text{for each } x\in\M. 
\end{equation}
Here $\clx$ is the canonical boundary lift of $\hx$ as in Definition \ref{def: canonical lift}.
\end{lemma}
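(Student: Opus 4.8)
The plan is to obtain Lemma~\ref{lemma: evolution of fibre area} directly from the computation carried out immediately above its statement, by localising the directional-derivative identity. First I would fix $x_0 \in \M$ and a $C^1$ curve $\hg:\,]-\delta,\delta[\,\map \M$ with $\hg(0)=x_0$ and $\dot{\hg}(0)=\hx_{x_0}$; by the triviality of the bundle $\OO \cong \M\times\F$ this lifts to $\gamma = \lift\hg$ in $\OO$, and $\clx$ (the canonical boundary lift of Definition~\ref{def: canonical lift}, extended to $\overline{\OO}$) is transversal to every fibre $\F_x$, so its flow $\phi$ solving ODE~\eqref{ODE} maps $\F_{x_0}$ diffeomorphically onto $\F_{\gamma(t)}$ for $|t|$ small. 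This is exactly what makes the change-of-variables formula $A(\gamma(t)) = \int_{\F_{\gamma(0)}}\det\na\phi(t,y)\,\dd\haus^d(y)$ legitimate.

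Next I would differentiate this in $t$: Jacobi's formula gives $\frac{\dd}{\dd t}\det\na\phi = \det\na\phi\,\cdot\,{\rm trace}\big((\na\phi)^{-1}\frac{\dd}{\dd t}\na\phi\big)$; commuting $\frac{\dd}{\dd t}$ with $\na$ and using ODE~\eqref{ODE} turns $\frac{\dd}{\dd t}\na\phi$ into $\na(\clx\circ\phi) = (\na\clx)\circ\phi\,\cdot\,\na\phi$, so the trace collapses to $({\rm trace}\,\na\clx)\circ\phi = (\dv\,\clx)\circ\phi$. Pulling the integral back to $\F_{\gamma(t)}$ by the same change of variables yields $\frac{\dd}{\dd t}A(\gamma(t)) = \int_{\F_{\gamma(t)}}\dv\,\clx(t,y)\,\dd\haus^d(y)$, and Stokes' theorem on the manifold $\F_{\gamma(t)}$ with boundary $\p\F_{\gamma(t)}$ and inward/outward unit normal $\nn$ converts this to the boundary integral $\int_{\p\F_x}\clx\cdot\nn\,\dd\haus^{d-1}$. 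Finally, evaluating at $t=0$ and noting that $\frac{\dd}{\dd t}\big|_{t=0}A(\gamma(t)) = \hna_{\hx}A(x_0)$ by definition of the covariant (here, directional) derivative of the scalar $A$ along $\hx$, and that $x_0\in\M$ was arbitrary, gives the claimed formula.

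The main obstacle — and the reason the statement is isolated as a lemma rather than folded into the preceding display — is the \emph{localisation and well-definedness} step: one must check that the value $\hna_{\hx}A(x)$ depends only on $\hx_x$ and not on the chosen representative curve $\hg$ or on the choice of extension of $\clx$ to $\overline{\OO}$, and that the flow $\phi$ stays a diffeomorphism of fibres for $t$ in a neighbourhood of $0$ so that the integrand $\det\na\phi$ never degenerates. The first point follows because $A$ is a genuine scalar function on $\M$ and the left side is its ordinary directional derivative (curves with the same initial velocity give the same answer by the chain rule), while the transversality~\eqref{transversality of lift} of $\clx$ to the fibres, together with the uniform injectivity-radius bound~\eqref{injectivity rad} and the $C^{r-1,\alpha}$ regularity of $\clx$, guarantees the flow remains a fibre-preserving diffeomorphism on a short time interval; hence $\det\na\phi(0,\cdot)\equiv 1$ and $\det\na\phi(t,\cdot)>0$ for small $t$ by continuity. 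Everything else is the routine calculus already displayed. One should also briefly note that all manipulations are justified by dominated convergence since $\F$ is compact and the integrands are continuous in $(t,y)$.
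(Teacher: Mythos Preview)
Your proposal is correct and follows essentially the same approach as the paper: the lemma is proved by the displayed computation immediately preceding its statement (Jacobi's formula, ODE~\eqref{ODE}, change of variables, Stokes' theorem), together with the observation that the directional derivative is local. Your added discussion of well-definedness and the short-time diffeomorphism property of the flow $\phi$ makes explicit what the paper compresses into the single phrase ``in view of the local nature of the directional derivative,'' but the argument is the same.
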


%The above lemma shows that the evolution of fibre area along vector field $\hx\in\G(T\M)$ can be computed by an integral involving the canonical lift $\clx$.
%In what follows let us establish another crucial identity concerning $A$. As a preliminary, let us first prove

We need to further extend $\clx$ from the fluid boundary $\fs$ to $\OO$. To this end, we shall first define a vertical extension of $\clx$ in each $\F_x$, which in addition has horizontal regularity across the fibres. This is achieved by considering a boundary value problem in each fibre:

\begin{lemma} \label{lemma: dlogA}
Let $A$ be as before. There exists a tensor field $V \in \G(T^*\M\otimes T\F)$, such that $V(x, \hx)\equiv V_{\hx}(x) \in \G(T\F_x)$ for each $x\in \M, \hx \in \G(T\M)$; $V_{\hx}(x)|_{\fs}=\proj\clx(x)|_{\fs}$; and  that
\begin{equation}
\hna_{\hx} \, \big\{\log A(x)\big\} = \dv_\F V_{\hx}(x) \qquad \text{ for each } x\in\M.
\end{equation}
Here, $\dv_\F$ on the right-hand side is the divergence on $\F_x$.
\end{lemma}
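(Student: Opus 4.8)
The plan is to convert the claimed identity into a first-order divergence equation on each fibre $\F_x$ with prescribed boundary trace, and to read off the solvability condition from Lemma~\ref{lemma: evolution of fibre area}. First I would reformulate the target: dividing the conclusion of Lemma~\ref{lemma: evolution of fibre area} by $A(x)>0$ (positivity, indeed a uniform lower bound $A\geq c_0>0$, follows from compactness together with \eqref{injectivity rad}) gives
\begin{equation*}
\hna_{\hx}\{\log A(x)\}=c_{\hx}(x):=\frac{1}{A(x)}\int_{\p\F_x}\clx\cdot\nn\,\dd\haus^{d-1},
\end{equation*}
a quantity that depends on $x$ alone, not on the fibre variable $y\in\F_x$, and that is linear in $\hx$ since $\beta$ --- and hence $\clx$ --- is linear in $\hx$. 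Thus the task becomes: for each $x$, construct $V_{\hx}(x)\in\G(T\F_x)$ whose fibre divergence equals the \emph{constant} $c_{\hx}(x)$ and whose restriction to $\p\F_x$ is $\proj\clx$, in such a way that $V$ is a section of $T^*\M\otimes T\F$ (of the same H\"{o}lder class as $\clx$, namely $C^{r-1,\alpha}$).

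Next I would peel off the boundary data. Recalling from Sect.\,2 that $\clx$ extends to $\overline\OO$ with $\langle\clx,\nu\rangle\equiv 0$, set $W_{\hx}(x):=\proj\,\clx(x)\in\G(T\F_x)$, the fibrewise projection of this extension; it is already a section of $T^*\M\otimes T\F$ with the correct trace $W_{\hx}(x)|_{\fs}=\proj\clx(x)|_{\fs}$, and since $\nn_y\in T\F_x$ for $y\in\p\F_x$ one has $W_{\hx}(x)\cdot\nn=\clx\cdot\nn$ on $\p\F_x$. It then remains to find a corrector $Z_{\hx}(x)\in\G(T\F_x)$ with $Z_{\hx}(x)|_{\p\F_x}=0$ and $\dv_\F Z_{\hx}(x)=f_{\hx}(x):=c_{\hx}(x)-\dv_\F W_{\hx}(x)$, and to put $V:=W+Z$. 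The source $f_{\hx}(x)$ has vanishing mean: by the divergence theorem on $\F_x$, the identity $W_{\hx}\cdot\nn=\clx\cdot\nn$ on $\p\F_x$, and the reformulation above,
\begin{equation*}
\int_{\F_x}f_{\hx}(x)\,\dd\haus^{d}=c_{\hx}(x)\,A(x)-\int_{\p\F_x}\clx\cdot\nn\,\dd\haus^{d-1}=\hna_{\hx}A(x)-\hna_{\hx}A(x)=0 ,
\end{equation*}
and this mean-zero identity is precisely the content of Lemma~\ref{lemma: evolution of fibre area}.

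The required corrector exists because, on the connected bounded $C^{r,\alpha}$ domain $\F_x$, the divergence equation with homogeneous Dirichlet data is solvable whenever the right-hand side has zero mean --- this is the classical Bogovskii problem --- with a solution produced by an explicit integral operator that is linear in the source, hence in $\hx$ (equivalently one may invoke Hodge theory on $\F_x$, which merely recombines the same ingredients). With $Z$ in hand, $V:=W+Z$ satisfies $V_{\hx}(x)|_{\fs}=\proj\clx(x)|_{\fs}$ and $\dv_\F V_{\hx}(x)=\dv_\F W_{\hx}(x)+f_{\hx}(x)=c_{\hx}(x)=\hna_{\hx}\{\log A(x)\}$, which is the claim. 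The point I expect to require the most care is the dependence on the fibre parameter: one must check that the solution operator of the fibrewise divergence equation can be chosen to depend on $x$ regularly enough, and uniformly in $x$ --- using compactness, \eqref{injectivity rad} and the $C^{r,\alpha}$ regularity of the family $\{\F_x\}$, conveniently after transferring everything to a fixed model fibre $\F$ via the diffeomorphisms $\F_x\cong\F$ and letting only the Riemannian metric vary --- so that $Z$, and therefore $V$, is a genuine tensor field in $\G(T^*\M\otimes T\F)$. The compatibility condition needed to run this last step comes for free, being exactly Lemma~\ref{lemma: evolution of fibre area}.
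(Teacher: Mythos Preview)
Your argument is correct and takes a genuinely different route from the paper. The paper solves, on each fibre, the scalar Neumann problem $\Delta_\F U_{\hx}=\hna_{\hx}\log A(x)$ in $\F_x$ with $\na_\F U_{\hx}\cdot\nn=\langle\clx,\nn\rangle$ on $\p\F_x$, checks the compatibility condition via Lemma~\ref{lemma: evolution of fibre area} exactly as you do, and then sets $V_{\hx}:=\na_\F U_{\hx}$; horizontal regularity comes from pulling the equation back along the flow of $\clx$ and differentiating. You instead peel off the boundary by the extension $W_{\hx}=\proj\clx$ and solve the homogeneous-Dirichlet divergence equation with a Bogovskii operator. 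Your construction more transparently yields the full boundary trace $V_{\hx}|_\fs=\proj\clx|_\fs$ stated in the lemma (the paper's Neumann construction only visibly fixes the normal component $V_{\hx}\cdot\nn=\clx\cdot\nn$), and the linearity in $\hx$ is immediate from the linearity of Bogovskii. What the paper's approach buys is extra structure: its $V_{\hx}$ is a gradient, so besides $\na_\F\dv_\F V_{\hx}=0$ (which your $V$ also satisfies, since $\dv_\F V_{\hx}$ is constant on each fibre) one gets $\Delta_\F V_{\hx}=0$ on flat fibres, a fact invoked in Step~2 of the proof of Theorem~\ref{thm: convergence of NS}. Your Bogovskii field need not be fibre-harmonic, so if you carry your $V$ through to Sect.~4 you would have to estimate the term $\mu\hD V_{\e,\hu}\cdot\proj(U_\e-u)$ together with a surviving $\mu\Delta_\F V_{\e,\hu}$ contribution rather than dropping the latter outright; this only costs an additional harmless $O(\e)$ term, but it is worth flagging.
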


\begin{proof}
	Fix $x\in\M$ and $\hx\in\G(T\M)$. Let us consider the following Neumann problem of the Poisson equation for $U_{\hx}:\F_x\map\R$, which depends on $\hx$:
	\begin{equation}\label{poisson eq}
\begin{cases}
\Delta_\F U_{\hx} = \hna_{\hx} \, \big\{\log A(x)\big\} \qquad \text{ in } \F_x,\\
\na_\F U_{\hx}\cdot \nn = \langle\clx, \nn\rangle \qquad \text{ on } \p\F_x.
\end{cases}
	\end{equation}
Here and throughout, the differential operators $\Delta_\F, \na_\F$ denote the Laplacian and the gradient in $\F_x$. As $A(x)$ and $\hna_{\hx}A$ are constant on $\F_x$, by Lemma \ref{lemma: evolution of fibre area} one has
\begin{equation}\label{U}
\int_{\F_x} \hna_{\hx} \log A(x) \,\dd\haus^{d} = [A(x)]^{-1} \int_{\F_x} \hna_{\hx} A(x)\,\dd\haus^d = \hna_{\hx} A(x)= \int_{\p\F_x} \langle\clx, \nn\rangle \, \dd\haus^{d-1}.
\end{equation}
Thus, there exists a solution in the fibre $U_{\hx} \in C^{r-1,\alpha}(\F_x)$ for each fixed $x$ ({\it e.g.}, as a variant of the theorem in Chapter 4, Weirheim \cite{uhlenbeck}). Let us set 
\begin{equation*}
V_{\hx}:=\na_\F U_{\hx}
\end{equation*}
where $U_{\hx}$ is determined by $\hx$ from Eq.\,\eqref{U}. By Eq.\,\eqref{poisson eq} above, we thus have $\dv_\F \,(V_{\hx}) = \Delta_\F U_{\hx} = \hna_{\hx}\log A$ in each $\F_x$.

In the above we have constructed $V_{\hx}$ in each fibre $\F_x$ with the $C^{r-1,\alpha}$ regularity. It remains to show that $V_{\hx}$ has the same regularity in the entire $\OO$, i.e., ``regular across the fibres''. To this end, let us restrict the arbitrary vector field $\hx \in \G(T\M)$ to the smooth curve $\hg:]-\delta,\delta[ \map \M$ passing through $\hg(0)=x_0$. As before, we also write $\clx$ for the canonical lift of $\hx|_{{\rm im}\,(\hg)}$, which is a vector field defined along the lifted curve $\lift\hg=:\gamma$, such that $\proj (\gamma(0))=x_0$ for the natural projection $\proj:\OO\map\M$. As in \eqref{ODE} we denote by $\phi(t,\cdot)\equiv\phi_t(\cdot)$ the flow of $\clx$.

Now, let us pull back Eq.\,\eqref{ODE} by $\phi_t$ to $\F_{x_0}$: 
\begin{equation}
\begin{cases}
\dv_\F\,\Big( \big\{\det\na_\F\phi_t\big\}\big\{\na_\F\phi_t \big\}^{-2} \na_\F [\phi_t^*U]\Big) = (\det \na_\F\phi_t)\, \frac{\dd}{\dd t}\log A\qquad \text{ in } \F_{x_0},\\
\na_\F (\phi_t^*U)\cdot (\phi_t^*\nn) = \langle\phi_t^*\clx, \phi_t^*\nn\rangle \qquad \text{ on } \p\F_{x_0}.
\end{cases}
\end{equation}
Thus, taking another directional derivative $\dd/\dd t=\hna_{\hx}$, we obtain a second-order nonlinear elliptic equation in $\dd U/\dd t$ with continuous coefficients, and similarly for the higher order horizontal derivatives. By the standard elliptic estimates we obtain the H\"{o}lder regularity in the horizontal, namely $\G(T\M)$, direction along $\hx$. Since $\hx$ is arbitrary, the proof is complete.  \end{proof}

The above lemma justifies the following
\begin{definition}\label{def: canonical lift in O}
Let $\hx \in \G(T\M)$ be a tangential vector field. Its canonical lift is a tangential vector field in the fluid domain $X\in \G(T\OO)$, defined by
\begin{equation}
X := (\id \oplus V)(\hx) = (\hx,V_{\hx})^\top  \in \G(TM \oplus T\F) \cong \G(T\OO).
\end{equation}
\end{definition}
By construction, the horizontal projection of $X$ is $\hx$, and the vertical projection on $X$ restricted to the fluid boundary $\fs$ coincides with the canonical boundary lift $\clx$ on each fibre $\F_x$. In geometric terms, $X$ yields a canonical choice of the trivial connection on the bundle $\overline{\OO} \cong \M\times\overline{\F}$ by specifying a horizontal section.

Now we are at the stage of proving the following important result: %If we have a suitably weighted version of the conservation of mass on $\M$, then the canonical lift of the velocity satisfies the continuity equation on the bundle $\OO$.
\begin{proposition}\label{propn: conservation law}
Let $A(x):=\haus^d(\F_x)$ be as above. Let $(\hr, \hu):[0,T]\times \M \map \R_+ \times \G(T\M)$ satisfy the following ``weighted continuity equation'':
\begin{equation}\label{weighted continuity equation}
\p_t\,(\hr A) + \dv (\hr\,\hu A) = 0.
\end{equation} 
Then there holds 
\begin{equation}\label{associated conservation law}
\p_t \hr + \dv (\hr\, U) = 0,
\end{equation}
where $U\in\G(T\OO)$ is the canonical lift of $\hu$.
\end{proposition}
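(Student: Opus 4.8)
\emph{Proof strategy.} The plan is to deduce \eqref{associated conservation law} directly from \eqref{weighted continuity equation} by a short computation, the only substantial ingredient being a splitting of the divergence of the canonical lift $U$. Throughout, the objects $\hr$, $\hu$, $A$ --- a priori defined on $\M$ --- are regarded as objects on $\OO$ via the projection $\proj$, i.e.\ as constant along each fibre $\F_x$; I write $\dv_\M$, $\dv_\OO$ for the divergences on $\M$ and on $\OO$. Two elementary facts are used at once: since $A=A(x)$ carries no time dependence, $\p_t A\equiv 0$; and since $\proj_*U=\hu$ and $\hr=\hr\circ\proj$, the derivation $U$ acts on $\hr$ by $U(\hr)=(\proj_*U)(\hr)=\hu(\hr)$, that is,
\begin{equation*}
U\cdot\na_\OO\hr \;=\; \hu\cdot\hna\hr \qquad\text{in }\OO.
\end{equation*}

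The geometric heart of the matter is the identity
\begin{equation}\label{eq:div-split-U}
\dv_\OO U \;=\; \dv_\M\hu + \dv_\F V_{\hu} \;=\; \dv_\M\hu + \hna_{\hu}\log A \;=\; \frac{1}{A}\,\dv_\M(A\hu)\qquad\text{in }\OO,
\end{equation}
where $V_{\hu}\in\G(T\F_x)$ is the vertical component of $U$ from Definition \ref{def: canonical lift in O}. The middle equality is precisely Lemma \ref{lemma: dlogA}, and the last one is the Leibniz rule $\dv_\M(A\hu)=A\,\dv_\M\hu+\hu\cdot\hna A$. The first equality --- that $\dv_\OO U$ receives only its horizontal contribution $\dv_\M\hu$ and the fibrewise vertical contribution $\dv_\F V_{\hu}$, with no cross term --- is where the standing hypotheses on $\OO$ enter: $\OO\cong\M\times\F$ is a trivial bundle whose fibres are transversal (and, up to the reduction of Sect.\,5, orthogonal) to the base, and the metrics obey $g=\iota_\M^*(\geu)$, $h_x=\iota_{\F_x}^*(\geu)$, so the Riemannian volume form of $\OO$ factors compatibly with the horizontal--vertical splitting and $\lie_U(\dd V_\OO)$ decomposes accordingly. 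Equivalently, \eqref{eq:div-split-U} can be verified in Lagrangian form: the flow $\Phi_t$ of $U$ on $\OO$ covers the flow $\psi_t$ of $\hu$ on $\M$ and maps $\F_x$ onto $\F_{\psi_t(x)}$, and, by Jacobi's formula together with the fact that $\dv_\F V_{\hu}=\hna_{\hu}\log A$ is constant along each fibre, the fibrewise Jacobian of $\Phi_t$ equals $A(\psi_t(x))/A(x)$ independently of the fibre point --- the very computation carried out just before Lemma \ref{lemma: evolution of fibre area}.

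Granting \eqref{eq:div-split-U}, the Leibniz rule on $\OO$ and the identity $U\cdot\na_\OO\hr=\hu\cdot\hna\hr$ give
\begin{equation*}
A\Big[\p_t\hr+\dv_\OO(\hr U)\Big] = A\,\p_t\hr + A\hr\,\dv_\OO U + A\,U\cdot\na_\OO\hr = A\,\p_t\hr + \hr\,\dv_\M(A\hu) + A\,\hu\cdot\hna\hr,
\end{equation*}
and, since $\dv_\M(\hr\hu A)=\hr\,\dv_\M(A\hu)+A\,\hu\cdot\hna\hr$ and $\p_t A\equiv 0$, the right-hand side equals $\p_t(\hr A)+\dv_\M(\hr\hu A)=0$ by \eqref{weighted continuity equation}. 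As $A=\haus^d(\F_x)$ is bounded below by a positive constant on the compact $\M$, we conclude $\p_t\hr+\dv_\OO(\hr U)=0$ in $\OO$, which is \eqref{associated conservation law}.

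I expect the one genuinely nontrivial point to be the first equality in \eqref{eq:div-split-U} --- that the canonical lift creates no divergence beyond $\dv_\M\hu$ and $\dv_\F V_{\hu}$ --- since this is precisely where one must track how the ambient volume form of $\OO$ interacts with the fibration; once it is secured, the remainder is the bookkeeping above combined with Lemma \ref{lemma: dlogA}.
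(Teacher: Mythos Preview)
Your proof is correct and follows essentially the same route as the paper's: both arguments hinge on the horizontal--vertical splitting of the divergence on $\OO$ combined with Lemma~\ref{lemma: dlogA}, and then reduce to the weighted continuity equation~\eqref{weighted continuity equation}. The only cosmetic difference is that the paper factors out $1/A$ and expands $\dv(\hr U A)$ directly, whereas you first isolate the scalar identity $\dv_\OO U = A^{-1}\dv_\M(A\hu)$ and then apply Leibniz; the underlying computation is identical.
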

\begin{proof}
 Let $U=(\hu, U_\F)$ be the decomposition into horizontal and vertical directions. We have
	\begin{align}
\p_t \hr + \hdv (\hr\, U) &= \frac{1}{A}\bigg\{ \p_t(\hr A) + \dv (\hr\, UA) - \hr\, U \cdot\na A \bigg\}\nonumber\\
&=\frac{1}{A}\bigg\{ \p_t(\hr A) + \hdv (\hr\, \hu A) +\dv_{\F}(\hr\,U_\F A) - \hr\, \hu \cdot\hna A - \hr \, U_\F\cdot\na_\F A \bigg\}.
	\end{align}
The first two terms in the bracket add up to zero by Eq.\,\eqref{weighted continuity equation}; in addition, $\na_\F A=0$ as $A$ is constant on each fibre. Hence,
\begin{equation}
\p_t \hr + \hdv (\hr\, U) = -\frac{\hr}{A} \Big\{\hu\cdot\hna A - (\dv_\F U)A \Big\},
\end{equation}
which is equal to zero in light of Lemma \eqref{lemma: dlogA}. The proof is complete.  \end{proof}

Recall that $\OO_\e$ is obtained from $\OO$ by rescaling in the vertical direction; see Eq.\,\eqref{def Oe}. We also define, for $\e\in]0,1[$, 
\begin{equation}\label{def: Ue}
X_\e := (\id \oplus V_\e)(\hx) = (\hx, V_{\e,\hx})^\top \in \G(T\M \oplus T[\e\F])\cong \G(T\OO_\e),
\end{equation}
where $V_\e \in \G(T^*\M \otimes T[\e\F])$ is given by
\begin{equation}\label{def of V epsilon}
V_{\e,\hx}(y) = \e V_{\hx}\big(\frac{y}{\e}\big) \qquad \text{ for each } y \in \F_x, \,\, x\in\M \text{ and } \hx \in \G(T\M).
\end{equation}
If $X\in\G(T\OO)$ is the canonical lift of some $\hx\in \G(T\M)$, then clearly $X_\e \in \G(T\OO_\e)$ is the canonical lift of $\hx$ to $\OO_\e$. Thus, Proposition \ref{propn: conservation law} remains valid when $X$ ($\OO$) is replaced by $X_\e$ ($\OO_\e$, resp.) therein. %The analogue to Definition \ref{def: canonical lift in O} is given by

%In the later parts of the paper, the canonical lift of the velocity field in the suitable weak solutions of the Navier--Stokes/Euler equations on $\M$ will serve as the comparison vector field for the relative entropy. This is central to the proof of the convergence results in Sect.\,4.

\section{Relative entropy and suitable weak solutions}

In this section we discuss the relative entropy functional and the weak formulation of the compressible Navier--Stokes Equations \eqref{continuity eq} -- \eqref{initial data}.

In \cite{germain} Germain introduced a class of weak solutions to the compressible Navier--Stokes equations satisfying the relative entropy inequality, in order to prove the weak-strong uniqueness results in this class. Then, in the spirit of \cite{germain}, Feireisl--Novotn\'{y}--Sun defined intrinsically the notion of {\em suitable weak solutions} and established the gloabl existence within this class for any finite-energy initial data, via an approximation scheme. Later in \cite{fjn} Feireisl--Jin--Novotn\'{y} proved that, in effect, every finite-energy weak solution is a suitable weak solution. In this paper we adopt the definition of the suitable weak solutions from \cite{fjn, fns}, but consider on the $N$-dimensional  domain $\OO$ as above, rather than the $3$-dimensional domains.

For this purpose, let us first introduce the {\em relative entropy functional} $\E$. Let $(\rho,u)$ and $(r,U)$ be two pairs of functions mapping $[0,T]\times\OO$ to $\R\times T\OO$. Consider the renormalisation function $H:\R_+\map\R$ defined, for some fixed constant $\lrho \geq 0$, as follows:
\begin{equation}
H(\rho):=\rho \int_{\lrho}^\rho \frac{p(\sigma)}{\sigma^2}\,\dd\sigma.
\end{equation} 
First, the function $H$ is defined as the (formal) solution to the following ODE:
\begin{equation}\label{def of H}
r H'(r) - H(r) = p(r) \qquad \text{ for } r>0.
\end{equation}
Second, the second derivative of $H$ takes a simple form:
\begin{equation}\label{H''}
H''(r)=\frac{p'(r)}{r}\qquad \text{ for } r>0.
\end{equation}
Then, setting
\begin{equation}\label{def of E}
\E(\rho,u|r,U) := \int_{\OO} \bigg\{\frac{1}{2}\rho|u-U|^2 + \Big[H(\rho)-H'(r)(\rho-r) - H(r)\Big] \bigg\}\,\dd x,
\end{equation}
we adopt the following notion (Definition \ref{def: suitable weak solution}) of suitable weak solutions from \cite{fns}.  
Throughout, for $N\times N$ matrices $A$ and $B$ we write $A:B = \sum_{1\leq i,j\leq N}A^i_jB^j_i$; for vectors $v\in \R^{N_1}$ and $w\in \R^{N_2}$, we denote by $v \otimes w$ the $N_1 \times N_2$ matrix $\{v^iw_j\}_{1\leq i \leq N_1, \, 1\leq j \leq N_2}$. Note that the analogous constructions can be done on $\OO_\e$, for each $\e>0$, to Eqs.\,\eqref{continuity eq on Oe} -- \eqref{initial data for Oe}. 
\begin{definition}\label{def: suitable weak solution}
Let $\OO\subset\R^N$ be the fluid domain as above. $(\rho, u): [0,T]\times \OO\map\R_+\times T\OO$ is a suitable weak solution to the compressible Navier--Stokes Equations \eqref{continuity eq} -- \eqref{initial data} if the following hold:
\begin{enumerate}
\item
$(\rho,u)$  satisfies Eqs.\,\eqref{continuity eq} -- \eqref{initial data} in the sense of distributions;
\item
For an arbitrary smooth, strictly positive function $r:[0,T]\times\OO\map\R_+$ and an arbitrary smooth vector field $U:[0,T]\times\OO\map T\OO$ with the boundary conditions \eqref{slip bc on O}, the {\em relative entropy inequality} holds as follows:
\begin{align}\label{relative entropy}
&\E(\rho,u|r,U) (t) + \int_0^t\int_{\OO} \big[\sph(\na u)-\sph(\na U)\big]:(\na u - \na U)\,\dd x\dd t'\nonumber\\
&\qquad\qquad\qquad \leq  \E(\rho,u|r,U) (0) +\int_0^t \RR(\rho,u,r,U)(t') \,\dd t',
\end{align}
for almost every $t \in [0,T]$. Here the remainder term $\RR$ is given by
\begin{align}\label{def of R}
\RR(\rho,u,r,U) &:= \int_{\OO} \rho\,\p_t U \cdot (U-u)\,\dd x + \int_{\OO} \rho\,\na U : \Big[u \otimes (U-u)\Big]\,\dd x \nonumber\\
&\qquad - \int_{\OO} \sph(\na U):(\na u - \na U)\,\dd x + \int_{\OO} \dv\, U \Big[p(r)-p(\rho)\Big]\,\dd x\nonumber\\
&\qquad +\int_{\OO} \bigg\{(r-\rho) \p_t H'(r) + \big(rU-\rho u\big)\cdot \na H'(r)\bigg\}\,\dd x.
\end{align}
\end{enumerate}
\end{definition}

Let us impose the following assumptions on the pressure function $p=p(\rho)$ as in \cite{bfln}:
\begin{equation}\label{assumption on p}
\begin{cases}
p \in C^0([0,\infty[) \cap C^3 (]0,\infty[),\\
p(0)=0,\\
p'>0 \text{ on } ]0,\infty[,\\
\lim_{\rho \map \infty} \big(\rho^{1-\gamma}\,p'(\rho)\big) =: p_\infty > 0 \quad \text{ for a certain } \gamma > N/2. 
\end{cases}
\end{equation}
These assumptions guarantee the existence of suitable weak solutions to the compressible Navier--Stokes Eqs.\,\eqref{continuity eq} -- \eqref{initial data} on $\OO$ (and hence on $\OO_\e$, by scaling); see  Lions \cite{lions}, Feireisl \cite{f} and the discussions below.     Moreover, they ensure the following useful identities (Eq.\,(2.10) in \cite{bfln}) for the integrand of the relative entropy:
\begin{align}\label{p1}
C_1(K) &\Big(|u-U|^2+|\rho-r|^2 \Big)\nonumber\\
\leq\,& \frac{1}{2}\rho|u-U|^2 + H(\rho) - H(r) - H'(r)(\rho-r) \leq C_2(K) \Big(|u-U|^2+|\rho-r|^2 \Big)
\end{align}
for all $\rho, r \in K \Subset ]0,\infty[$ compact, as well as
\begin{align}\label{p2}
\frac{1}{2}\rho|u-U|^2 + H(\rho) - H(r) - H'(r)(\rho-r) \geq C_3(K,K')\Big(1+\rho|u-U|^2 + \rho^\gamma\Big)
\end{align}
for all $r \in K \subset {\rm int}\,(K')$ and $\rho \in [0,\infty[\setminus K'$, where $K' \Subset ]0,\infty[$ is compact.

In passing, let us remark that the ``dissipative weak solutions'' in the sense of Lions \cite{lions} are, in fact, suitable weak solutions. The $N=3$ case is proved by Feireisl--Novotn\'{y}--Petzeltov\'{a} \cite{fnp} and Feireisl--Jin--Novotn\'{y} \cite{fjn}. The proof therein carried over the case of arbitrary $N$.

\section{Convergence of solutions as the product manifolds collapse}

\subsection{Dimension reduction limit of the Navier--Stokes system}

In this subsection we prove the first main result of the paper, Theorem \ref{thm: convergence of NS}. It answers the problem {\bf (P--NS)} in the affirmative, {\em provided that the classical solution exists}. More precisely, it establishes the convergence from {\em suitable weak solutions} of the compressible Navier--Stokes equations on $\OO_\e$ to the {\em classical solution} of the limiting equations on $\M$, as the product manifolds $\OO_\e$ collapse to $\M$. 
\begin{theorem}\label{thm: convergence of NS}
Let $\OO \cong \M \times \F \subset \R^N$ be a fluid domain as in Sect.\,$1$. Let $(\rho^\e,u^\e):[0,T]\times \OO_\e \map \R_+ \times T\OO_\e$ be a family of suitable weak solutions to the Navier--Stokes equations \eqref{continuity eq on Oe}--\eqref{initial data for Oe}, indexed by $\e \map 0^+$, whose pressure term $p$ satisfies \eqref{assumption on p}. Suppose that Eqs.\,\eqref{limiting continuity eq, NS}--\eqref{limiting initial data, NS} have a classical solution $(\hr, \hu):[0,T]\times \M \map \R_+ \times T\M$, which satisfies
	\begin{equation}
	\Lambda:={\rm ess \, sup}_{t\in[0,T]}\, \big\|\hu(t,\cdot)\big\|_{C^2(\M)} < \infty,
	\end{equation}
as well as
\begin{equation}\label{assumption on rho hat}
0<\lrho \leq \hr(t,x) \leq \urho <\infty\qquad \text{ on } [0,T]\times\M
\end{equation}
for some constants $\lrho,\urho$.  The bulk viscosity constant $\eta$ is strictly positive. Moreover, assume that the rescaled domains $\OO_\e$ support the uniform Korn's  inequality as $\e\map 0^+$, in the following sense: there exists a constant $C>0$, independent of $\e$, such that
\begin{equation}\label{uniform korn ineq}
\|\phi\|_{W^{1,2}(\OO_\e)} \leq C \|\mathbb{D}(\na \phi)\|_{L^2(\OO_\e)}\qquad \text{ for any } \phi \in W^{1,2}(\OO_\e) \text{ for each } \e>0.
\end{equation}
Then, the following holds: There exists a constant $C_0$, depending only on $\Lambda$, $T$, the geometry of $\OO$ and the physical  constants specific to the fluid, such that for {\it a.e.} $t\in ]0,T]$, we have
\begin{equation}
\frac{1}{|\OO_\e|} \E^\e\big(\rho^\e, u^\e|\hr,\hu\big)(t) \leq C_0 \bigg\{\e + \frac{1}{|\OO_\e|} \E^\e\big(\rho^\e, u^\e|\hr,\hu\big)(0) \bigg\}.
\end{equation}
\end{theorem}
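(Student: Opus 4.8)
The plan is to run the relative entropy (weak--strong uniqueness) argument on the collapsing domains, using the limiting solution, lifted to $\OO_\e$ by the construction of Section~2, as the ``strong'' comparison pair in Definition~\ref{def: suitable weak solution}(2). Concretely, in the relative entropy inequality \eqref{relative entropy}, written on $\OO_\e$ for the suitable weak solution $(\rho^\e,u^\e)$, I insert $r:=\hr\circ\proj$ and $U:=U_\e$, where $U_\e\in\G(T\OO_\e)$ is the canonical lift of $\hu$ from Definition~\ref{def: canonical lift in O} and \eqref{def: Ue}. This pair is admissible: $r$ is smooth and $0<\lrho\le r\le\urho$ by \eqref{assumption on rho hat}; $U_\e$ satisfies the impermeability condition $U_\e\cdot\nu^\e=0$ on $\fs_\e$ because, by construction, its vertical trace there is the canonical boundary lift $\clx$, for which $\langle\clx,\nu^\e\rangle\equiv0$ (Lemma~\ref{lemma: geometry}, Definition~\ref{def: canonical lift}); and $U_\e\equiv0$ on $\fs_\e'$ since $\hu=0$ on $\p\M$ by \eqref{limiting bc on pM, NS}. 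As $U_\e$ is only $C^{r-1,\alpha}$ with $r\ge 2$, a standard density argument first extends \eqref{relative entropy} to this comparison pair. The whole proof reduces to establishing
\[
\int_0^t\RR(\rho^\e,u^\e,\hr,U_\e)\,\dd t'\ \le\ C\,\e\,|\OO_\e|\,t\ +\ C\int_0^t\E^\e(\rho^\e,u^\e\,|\,\hr,\hu)\,\dd t';
\]
feeding this into \eqref{relative entropy}, discarding the non-negative residual viscous term, dividing by $|\OO_\e|$ and invoking Grönwall's lemma then gives the assertion. Here one also uses that, the fibres being vertically of size $O(\e)$, $\E^\e(\rho^\e,u^\e\,|\,\hr\circ\proj,U_\e)$ and $\E^\e(\rho^\e,u^\e\,|\,\hr,\hu)$ as intended in the statement differ by $O(\e\,|\OO_\e|)$.

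Two ingredients from Section~2 organise the remainder $\RR(\rho^\e,u^\e,\hr,U_\e)$ of \eqref{def of R}. First, since $(\hr,\hu)$ solves the weighted continuity equation \eqref{limiting continuity eq, NS}, Proposition~\ref{propn: conservation law} (which holds on $\OO_\e$ with $U_\e$) gives $\p_t\hr+\dv(\hr\,U_\e)=0$ on $\OO_\e$; as in the usual relative entropy calculus this is used to rewrite the last line of \eqref{def of R} and to cancel its part linear in $\rho^\e-\hr$ against the pressure term $\int_{\OO_\e}\dv\, U_\e\,[p(\hr)-p(\rho^\e)]$, leaving only quadratic remainders controlled by $\E^\e$ plus an $O(\e)$ geometric error. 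Second, in the nonlinear terms $\int\rho^\e\,\p_tU_\e\cdot(U_\e-u^\e)$, $\int\rho^\e\,\na U_\e:[u^\e\otimes(U_\e-u^\e)]$ and $-\int\sph(\na U_\e):(\na u^\e-\na U_\e)$ I split $u^\e=U_\e+(u^\e-U_\e)$, integrate the viscous term by parts (its boundary contribution vanishes on $\fs_\e'$ and is tangential on $\fs_\e$), and substitute the limiting momentum equation \eqref{limiting momentum eq, NS}. The geometry enters exactly here: a direct computation gives $\dv U_\e=\hdv\hu+\dv_\F V_{\e,\hx}+O(\e)$, and Lemma~\ref{lemma: dlogA} together with the scaling \eqref{def of V epsilon} identifies $\dv_\F V_{\e,\hx}=(\hna_{\hu}\log A)\circ\proj$; hence the horizontal part of $\dv\,\sph(\na U_\e)=\mu\Delta U_\e+\vis\,\na(\dv U_\e)$ equals $\mu\hD\hu+\vis\hna\hdv\hu+\TT(A,\hu)+O(\e)$, which by \eqref{limiting momentum eq, NS} equals $\hr(\p_t\hu+\hu\cdot\hna\hu)+\hna p(\hr)+O(\e)$. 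This is precisely why the Hessian term $\TT(A,\hu)$ appears in the limiting system, and it produces the cancellations that reduce $\RR$ to manageable pieces.

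After these manipulations $\int_0^t\RR$ is a sum of: (i) terms of the shape $\int\rho^\e\,(\text{bounded})\,|u^\e-U_\e|^2$ and $\int(\text{bounded})\,(\rho^\e-\hr)(u^\e-U_\e)$, bounded by $C\int_0^t\E^\e$ via the coercivity estimates \eqref{p1}--\eqref{p2} (the regime of large $\rho^\e$ absorbed through \eqref{p2}); (ii) cross terms pairing $\na(u^\e-U_\e)$ against bounded coefficients, absorbed into a small multiple of the dissipation $\int_0^t\int_{\OO_\e}[\sph(\na u^\e)-\sph(\na U_\e)]:(\na u^\e-\na U_\e)$ on the left of \eqref{relative entropy} --- which, since $\eta>0$, dominates $c\int_0^t\|\mathbb{D}(\na(u^\e-U_\e))\|_{L^2(\OO_\e)}^2$ and hence, by the \emph{uniform} Korn inequality \eqref{uniform korn ineq} applied to $u^\e-U_\e$ (which vanishes on $\fs_\e'$), dominates $c\int_0^t\|u^\e-U_\e\|_{W^{1,2}(\OO_\e)}^2$ with an $\e$-independent constant; and (iii) genuine error terms of size $O(\e\,|\OO_\e|)$. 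This delivers exactly the bound on $\int_0^t\RR$ quoted above.

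The main obstacle is item (iii): showing that all error terms are uniformly $O(\e\,|\OO_\e|)$. They arise from the mismatch between the $N$-dimensional operators on $\OO_\e$ and the intrinsic $n$-dimensional operators on $\M$ --- involving the curvature of $\M$, the second fundamental forms of the fibres $\F_x$, and their tilting relative to $T_x\M$, all controlled by the fixed geometry via \eqref{injectivity rad} and the $C^{r,\alpha}$ hypotheses and entering multiplied by the $O(\e)$ vertical extent of $\OO_\e$; and from the subprincipal parts of $U_\e$, for which the scaling \eqref{def of V epsilon} yields $\|V_{\e,\hx}\|_{L^\infty}=O(\e)$ and $\|\hna V_{\e,\hx}\|_{L^\infty}=O(\e)$ but $\|\na_\F V_{\e,\hx}\|_{L^\infty}=O(1)$, so that the vertical directions need care --- one must exploit the divergence form $V_{\hx}=\na_\F U_{\hx}$ from the fibrewise Poisson problem of Lemma~\ref{lemma: dlogA}, integrate over each thin fibre (of $d$-Hausdorff measure comparable to $\e^d$, with $|\OO_\e|$ comparable to $\e^d|\M|$), and use the finite-energy a priori bounds for $(\rho^\e,u^\e)$, all while keeping every constant independent of $\e$. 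This uniform bookkeeping, rather than any isolated clever step, is the real technical burden, and the uniform Korn inequality \eqref{uniform korn ineq} is hypothesised precisely so that the absorption in (ii) survives as $\e\map 0^+$.
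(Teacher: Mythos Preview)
Your proposal is correct and follows essentially the same route as the paper: insert $(\hr\circ\proj,U_\e)$ with $U_\e$ the canonical lift of $\hu$ into the relative entropy inequality on $\OO_\e$, use Proposition~\ref{propn: conservation law} to rewrite the $H'(\hr)$-terms, integrate the stress term by parts to obtain $-\int_{\OO_\e}\dv\,\sph(\na U_\e)\cdot(U_\e-u^\e)$, split $\dv\,\sph(\na U_\e)$ horizontally/vertically via Lemma~\ref{lemma: dlogA} so that the $\TT(A,\hu)$ cancellation against \eqref{limiting momentum eq, NS} emerges, and close by Gr\"{o}nwall.

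One point of organisation is slightly off and worth correcting. After the integration by parts and the cancellation with the limiting momentum equation there are \emph{no} cross terms of the type ``bounded $\cdot\,\na(u^\e-U_\e)$'' left on the right-hand side; your item (ii) as written does not arise. What actually forces the uniform Korn inequality is a residual $\delta\int_{\OO_\e}|U_\e-u^\e|^2$ \emph{without} the density weight $\rho^\e$, produced by Cauchy--Schwarz on the $(\rho^\e-\hr)(U_\e-u^\e)$ terms of your item (i) in the near-vacuum regime. The paper handles this by introducing a cutoff $\chi(\rho^\e)$ supported in $[\lrho/2,2\urho]$: on the ``middle'' part \eqref{p1} controls everything by $\E^\e$, while on the ``ends'' one is left with $\delta\int[1-\chi(\rho^\e)]|U_\e-u^\e|^2$, which is not dominated by $\E^\e$ and must be absorbed into the dissipation on the left via \eqref{uniform korn ineq}. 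So Korn is needed for a slightly different reason than you indicate, but the mechanism you describe --- $\eta>0$ makes the dissipation control $\|\mathbb{D}(\na(u^\e-U_\e))\|_{L^2}^2$, then Korn recovers $\|u^\e-U_\e\|_{L^2}^2$ with an $\e$-independent constant --- is exactly the one used.
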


Before giving the proof, let us remark that the $C^{r,\alpha}$ regularity ($r\geq 2$, $\alpha \in ]0,1[$) and the injectivity radius bound ${\rm inj}\,(\OO) \geq \iota_0>0$ of the fluid domain $\OO$ are essential geometric assumptions  for the theorem. Moreover, Eq.\,\eqref{assumption on rho hat} is a natural condition on the {\em classical} solution $\hr$ --- it has neither concentration nor vacuum. Also, one needs $\eta >0$ to apply the uniform Korn's inequality. It should be emphasised that $C_0$ is independent of $\e$ and $(\rho^\e, u^\e)$.

\begin{proof}
For simplicity, throughout the proof let us denote by $(\rho,u)\equiv (\rho^\e, u^\e)$. Let us consider the relative entropy between $(\hr, U_\e)$ and $(\rho, u)$ in $\OO_\e$:
\begin{equation}
\E^\e\big(\rho,u|\hr,U_\e\big) := \int_{\OO_\e} \bigg\{\frac{1}{2}\rho|u-U_\e|^2 + \Big[H(\rho)-H'(\hr)(\rho-\hr) - H(\hr)\Big] \bigg\}\,\dd x,
\end{equation}
where $U_\e$ denotes the canonical lift of $\hu$; see Definition \ref{def: canonical lift in O}. That is, 
\begin{equation}\label{def of Ue}
U_\e := (\id \oplus V_\e) (\hu) = (\hu, V_{\e,\hu})^\top,
\end{equation}
where $V_{\e, \hu}$ denotes the vertical component of the canonical lift of $\hx$:
\begin{equation}\label{V eps u hat}
V_{\e, \hu} := \proj\, U_\e,
\end{equation}
and $\proj:\OO_\e \map \e\F$ is the  natural vertical projection onto the fibres. As $U_\e$ converges to $\hu$ strongly in the topology of $\hu$ ({\it e.g.}, in $C^0_tC^2_x$ as required by $\Lambda$, and such convergence depends only on the geometry of $\OO$), our strategy is to establish the inequality for $\E^\e\big(\rho,u|\hr,U_\e\big)$ and then send $\e\map 0^+$. As $(\rho, u)$ is a suitable weak solution, we have the relative entropy inequality for almost every $t\in[0,T]$:
\begin{align}\label{relative entropy ineq, epsilon}
&\E^\e(\rho,u|\hr, U_\e) (t) + \int_0^t\int_{\OO_\e} \big[\sph(\na u)-\sph(\na U_\e)\big]:(\na u - \na U_\e)\,\dd x\dd t'\nonumber\\
&\qquad\qquad\qquad \leq  \E^\e(\rho,u|\hr, U_\e) (0) +\int_0^t \RR^\e(\rho,u,\hr,U_\e)(t') \,\dd t'.
\end{align}
The remainder term is given by
\begin{align}\label{Re, before simplifying}
\RR^\e(\rho,u,\hr,U_\e) &:= \int_{\OO_\e} \rho\,\p_t U_\e \cdot (U_\e-u)\,\dd x + \int_{\OO_\e} \rho\,\na U_\e : \Big[u \otimes (U_\e-u)\Big]\,\dd x \nonumber\\
&\qquad - \int_{\OO_\e} \sph(\na U_\e):(\na u - \na U_\e)\,\dd x + \int_{\OO_\e} \dv\, U_\e \Big[p(\hr)-p(\rho)\Big]\,\dd x\nonumber\\
&\qquad +\int_{\OO_\e} \bigg\{(\hr-\rho) \p_t H'(\hr) + \big(\hr U_\e-\rho u\big)\cdot \na H'(\hr)\bigg\}\,\dd x \nonumber\\
&=: \one'^\e + \two'^\e + \three'^\e + \four'^\e + \five'^\e.
\end{align}

Therefore, the proof is reduced to  estimating the above expression \eqref{Re, before simplifying}. In the sequel, we divide our estimates into five steps.

\smallskip
{\bf Step 1.} Let us first simplify $(\one'^\e + \two'^\e+\five'^\e)$. By the identity \eqref{H''}, one has
\begin{align}
\one'^\e + \two'^\e+\five'^\e &= \int_{\OO_\e}\bigg\{\rho\bigg( \p_t U_\e\cdot(U_\e-u) + u\cdot \na U_\e\cdot(U_\e-u) \bigg)\nonumber\\
&\qquad\qquad + \frac{\hr-\rho}{\hr} (\p_t\hr)p'(\hr) + \frac{\hr(U_\e-u)+(\hr-\rho)u}{\hr}\cdot \na \hr \,p'(\hr)\bigg\}\,\dd x.
\end{align}
where $a\cdot M\cdot b\equiv M\cdot (a\otimes b)$ for vectors $a,b\in\R^N$ and $N\times N$ matrix $M$. Then, adding and subtracting the second and the third terms $I^2, I^3$ in below, we get
\begin{align}\label{I, II, V}
\one'^\e + \two'^\e+\five'^\e &= I^1 +I^2 + I^3+I^4+I^5\nonumber\\
&:= \int_{\OO_\e} \bigg\{\rho\bigg( \p_t \big(U_\e-\hu\big) + \big(U_\e \cdot \na U_\e - \hu \cdot \hna\hu \big) \bigg)\cdot (U_\e-u)\bigg\}\,\dd x\nonumber\\
&\quad + \int_{\OO_\e} \rho\big(\p_t\hu + \hu \cdot \hna\hu \big)\cdot(U_\e-u)\,\dd x -\int_{\OO_\e} \rho\na U_\e:(U_\e-u)\otimes (U_\e-u)\,\dd x\nonumber\\
&\quad+ \int_{\OO_\e} \frac{\hr-\rho}{\hr} \,p'(\hr)\big(\p_t\hr + u\cdot \na \hr \big)\,\dd x + \int_{\OO_\e} (U_\e-u)\cdot \na \hr\,p'(\hr)\,\dd x.
\end{align}
Applying Eq.\,\eqref{limiting momentum eq, NS}, the limiting monemtum equation on $\M$ to $I^2$, we get
\begin{equation}\label{I2}
I^2 =\int_{\OO_\e} \frac{\rho}{\hr}\bigg\{ \mu\hD\hu +\big(\eta + \frac{N-2}{N}\mu\big)\hna\hdv\hu + \TT(A,\hu) - \hna p(\hr) \bigg\} \cdot (U_\e-u)\,\dd x.
\end{equation} 
On the other hand, in view of Proposition \ref{propn: conservation law} we have
\begin{equation}
\p_t\hr + \dv\, (\hr\, U_\e) =  0 \qquad \text{ in } \OO_\e,
\end{equation}
which allows us to simplify $I^4$:
\begin{equation}\label{I4}
I^4 = -\int_{\OO_\e} \frac{\hr-\rho}{\hr}\,\na p(\hr)\cdot (U_\e -u)\,\dd x - \int_{\OO_\e} (\hr-\rho)\,p'(\hr)\,\dv\,U_\e\,\dd x.
\end{equation}
Thus, Eqs.\,\eqref{I, II, V}\eqref{I2} and \eqref{I4} together imply that
\begin{align}
\one'^\e + \two'^\e+\five'^\e &= \int_{\OO_\e} \bigg\{\rho \p_t \big(U_\e-\hu\big) -\int_{\OO_\e} \rho\na U_\e:(U_\e-u)\otimes (U_\e-u)\,\dd x\nonumber\\
&\quad +\int_{\OO_\e} \frac{\rho}{\hr}\bigg\{ \mu\hD\hu+\big(\eta + \frac{N-2}{N}\mu\big)\hna\hdv\hu + \TT(A,\hu) \bigg\} \cdot (U_\e-u)\,\dd x \nonumber\\
&\quad -  \int_{\OO_\e} (\hr-\rho)\,p'(\hr)\,\dv\,U_\e\,\dd x.
\end{align}
This further simplifies the $\RR^\e$ term in Eq.\,\eqref{Re, before simplifying}:
\begin{align}\label{Re, 1}
\RR^\e(\rho, u, \hr, U_\e) &= \int_{\OO_\e} \dv\, U_\e \bigg\{p(\hr)-p(\rho)-(\hr-\rho) p'(\hr) \bigg\}\,\dd x \nonumber\\
&\quad - \int_{\OO_\e} \rho \na U_\e : \big(U_\e-u\big) \otimes \big(U_\e-u\big) \,\dd x + \int_{\OO_\e} \sph(\na U_\e): \na(U_\e -u)\,\dd x\nonumber\\
&\quad +\int_{\OO_\e} \frac{\rho}{\hr}\bigg\{ \mu\hD\hu+\big(\eta + \frac{N-2}{N}\mu\big)\hna\hdv\hu + \TT(A,\hu) \bigg\} \cdot (U_\e-u)\,\dd x\nonumber\\
&\quad +  \int_{\OO_\e} \rho \bigg\{\,\p_t \big(U_\e-\hu\big) + U_\e \cdot \na U_\e - \hu \cdot \hna \hu\bigg\}\cdot(U_\e-u) \,\dd x.
\end{align}

\smallskip
{\bf Step 2.} Now we analyse the term involving the stress tensor $\sph(\na U_\e)$.  Integrating $\three^\e=\int_{\OO_\e}\sph(\na U_\e):\na(U_\e-u)\,\dd x$ by parts and applying the boundary condition \eqref{slip bc on Oe}, Lemma \ref{lemma: geometry} and the definition of $U_\e$ in \eqref{def of Ue}, we get
\begin{equation}
\int_{\OO_\e}\sph(\na U_\e):\na(U_\e-u)\,\dd x = -\int_{\OO_\e} [\dv\,\sph(\na U_\e)]\cdot(U_\e-u)\,\dd x.
\end{equation}
Using the definition of $\sph$, one easily deduces
\begin{equation}
\dv\,\sph(\na U_\e) = \mu \Delta U_\e + \vis \na\dv\,U_\e.
\end{equation}
Hence, by decomposing $\OO_\e$ along the horizontal ($T\M$) and vertical (fibre) directions, we have
\begin{align}\label{xxx}
\dv\,\sph(\na U_\e)\cdot (U_\e-u) &= \mu \bigg\{\Delta_\F V_{\e, \hu} + \hD V_{\e, \hu} \bigg\} \cdot \proj (U_\e-u)+\mu \hD\hu\cdot \huu \nonumber\\
&\quad + \vis \bigg\{\na_\F\dv_\F V_{\e,\hu}+\na_\F\hdv\hu\bigg\}\cdot \proj (U_\e-u)\nonumber\\
&\quad +\vis \bigg\{\hna\dv_\F V_{\e, \hu} + \hna\hdv\hu\bigg\}\cdot \huu.
\end{align}
Here we recall the notations: $\hna, \hdv, \hD$ are the covariant derivative, divergence and Laplace--Beltrami operators on $T\M$, and $\na_\F, \dv_\F, \Delta_\F$ are the corresponding differential operators on $\F$; $V_{\e, \hu}$ is defined in Eq.\,\eqref{V eps u hat}; in addition, $pr$ is the natural projection of $T\OO$ (or $T\OO_\e$) onto $T\F$ (or $T\F_\e$, resp.)

To proceed, by Lemma \ref{lemma: dlogA} and an obvious scaling, we find that $\Delta_\F V_{\e,\hu}=0$, $\na_\F\dv_\F V_{\e,\hu}=0$, $\na_\F\hdv\hu=0$ and $\dv_\F V_{\e, \hu}=\hna_{\hu}\log A$. So Eq.\,\eqref{xxx} becomes
\begin{align}
\dv\,\sph(\na U_\e)\cdot (U_\e-u) &= \mu \hD V_{\e,\hu}\cdot \proj (U_\e-u) + \mu\hD\hu\cdot\huu \nonumber\\
&+\vis \Big\{\hna_{\huu} \hna_{\hu}\log A + \hna_{\huu} \hdv \,\hu\Big\}.
\end{align}
Substituting back into Eq.\,\eqref{Re, 1}, one obtains:
\begin{align}\label{Re, 2}
\RR^\e(\rho, u, \hr, U_\e) &= \int_{\OO_\e} \dv\, U_\e \bigg\{p(\hr)-p(\rho)-(\hr-\rho) p'(\hr) \bigg\}\,\dd x \nonumber\\
&\quad - \int_{\OO_\e} \rho \na U_\e : \big(U_\e-u\big) \otimes \big(U_\e-u\big) \,\dd x \nonumber\\
&\quad +  \int_{\OO_\e} \rho \bigg\{\,\p_t \big(U_\e-\hu\big) + U_\e \cdot \na U_\e - \hu \cdot \hna \hu\bigg\}\cdot(U_\e-u) \,\dd x\nonumber\\
&\quad - \int_{\OO_\e} \bigg\{\mu \hD V_{\e,\hu}\cdot \proj (U_\e-u) +\mu\hD\hu\cdot\huu  \nonumber\\
&\qquad\qquad +\vis \Big(\hna_{\huu} \hna_{\hu}\log A + \hna_{\huu} \hdv \,\hu\Big)\bigg\}\,\dd x\nonumber\\
&\quad +\int_{\OO_\e} \frac{\rho}{\hr}\bigg\{ \mu\hD\hu+\big(\eta + \frac{N-2}{N}\mu\big)\hna\hdv\hu + \TT(A,\hu)\bigg\} \cdot \huu\,\dd x.
\end{align}
Therefore, noticing by Eq.\,\eqref{T term} that
\begin{equation*}
\TT(A,\hu)\cdot\huu = \vis \hna_{\huu}\hna_{\hu}\log A,
\end{equation*}
and that $V_{\e,\hu} = \hu$ for any $\e>0$ when restricted to $T\M$, we can rewrite Eq.\,\eqref{Re, 2} as follows:
\begin{align}\label{Re, 3}
\RR^\e(\rho, u, \hr, U_\e) &= \int_{\OO_\e} \dv\, U_\e \bigg\{p(\hr)-p(\rho)-(\hr-\rho) p'(\hr) \bigg\}\,\dd x \nonumber\\
&\quad - \int_{\OO_\e} \rho \na U_\e : \big(U_\e-u\big) \otimes \big(U_\e-u\big) \,\dd x \nonumber\\
&\quad +  \int_{\OO_\e} \rho \bigg\{\,\p_t \big(U_\e-\hu\big) + U_\e \cdot \na U_\e - \hu \cdot \hna \hu\bigg\}\cdot(U_\e-u) \,\dd x\nonumber\\
&\quad -\mu\int_{\OO_\e} \hD V_{\e,\hu}\cdot \proj (U_\e-u)\,\dd x\nonumber\\
&\quad + \int_{\OO_\e}\frac{\rho-\hr}{\hr}  \bigg\{\mu\hD\hu\cdot\huu \nonumber\\
&\quad\quad+ \vis \Big(\hna_{\huu}\hna_{\hu}\log A+\hna_{\huu}\hdv\hu\Big) \bigg\}\,\dd x\nonumber\\
&=: \one^\e + \two^\e + \three^\e + \four^\e + \five^\e. 
\end{align}

\smallskip
{\bf Step 3.} Now, in view of Eq.\,\eqref{Re, 3} and the relative entropy inequality \eqref{relative entropy ineq, epsilon}, the convergence can be established by taking $\e \map 0^+$ in each of the terms $\one^\e$ --- $\five^\e$.  

\noindent
{\bf Estimate for $\one^\e$.} Let us set $Q(\rho):=p(\hr)-p(\rho)-(\hr-\rho) p'(\hr)$. 

For $\rho \in [0,\urho]$, we clearly have $|Q(\rho)|\leq C_4=C(\lrho,\urho,\gamma)$. Also, for $\rho \in ]\urho,\infty[$, since $\lim_{\rho\map\infty}{\rho^{1-\gamma}p'(\rho) }=p_\infty$, we have $|Q(\rho)|\leq C_5(1+\rho^\gamma)$, with $C_5=C(p_\infty, \urho,\gamma)$. On the other hand, as $U_\e := (\id \oplus V_\e)(\hu)$ and $\hu$ is the classical solution on $\M$, $\|\dv\, U_\e(t,\cdot)\|_{C^0(\M)} \leq C_6=C(\|\hu\|_{C^0_tC^1_x})$ for each $t\in [0,T]$.  Hence, in view of the coercivity condition \eqref{p2}, we have
\begin{equation}\label{one estimate}
\big|\one^\e(t)\big| \leq C_7\, \E^\e\big(\rho, u|\hr,U_\e\big)(t),
\end{equation}
where $C_7$ depends possibly on $\|\hu\|_{C^0_tC^1_x}$, $\lrho,\urho$, $\gamma$ and $p_\infty$.

\noindent
{\bf Estimate for $\two^\e$.} Again  $\|\na U_\e(t,\cdot)\|_{C^0(\M)}\leq C_6$. Also, the Cauchy--Schwarz inequality for $N \times N$ matrices leads to $|(U_\e-u)\otimes(U_\e-u)| \leq N |U_\e-u|^2$, which implies that
\begin{equation}\label{two estimate}
\big| \two^\e(t) \big| \leq C_8 \E^\e\big(\rho, u|\hr,U_\e\big)(t),
\end{equation}
with $C_8=C(\|\hu\|_{C^0_tC^1_x}, N)$.

\noindent
{\bf Estimate for $\three^\e$.} Now we make crucial use of the definition $U_\e := (\id \oplus V_\e)(\hu) = (\hu, V_{\e,\hu})^\top$ by noting that
\begin{equation}
U_\e - \hu = V_{\e,\hu}
\end{equation}
is purely vertical, {\it i.e.}, lies in rescaled fibres $\e\F$. Recall from Eq.\,\eqref{def of V epsilon} that 
\begin{equation*}
V_{\e,\hu}(\bullet) = \e V_{\hu}\big(\frac{\bullet}{\e}\big) \qquad\text{ on } \e\F,
\end{equation*}
where $\G(T\M)\ni\hu \mapsto V_{\hu} \in \G(T\F)$ is the vertical component of the canonical lift of $\hu$ (see Definition \ref{def: canonical lift in O}). Hence
\begin{equation}\label{Ue-u}
\|U_\e(t,\cdot) - \hu(t,\cdot)\|_{C^0(\M)} \leq C_9\e \qquad \text{ for each } t\in[0,T],
\end{equation}
where $C_9=C(\OO)$ is determined by the geometry of the fluid domain $\OO$, independent of $\e$. Moreover, the classical solution $\hu$ satisfies $\|\p_t\hu(t,\cdot)\|_{C^0(\M)}\leq C_{10}=C(\|\hu\|_{C^0_tC^1_x})$. Thus
\begin{equation}
\big|\int_{\OO_\e} \rho\p_t(U_\e-\hu)\cdot (U_\e-u) \,\dd x\big| \leq C_{10}\e \int_{\OO_\e} \rho|U_\e-u|\,\dd x.
\end{equation}
Similarly, there exists $C_{11}=C(\|\hu\|_{C^0_tC^0_x},\OO)$ such that
\begin{equation}
\big|U_\e\cdot \na U_\e - \hu \cdot \hna \hu\big| \leq |U_\e||\na(U_\e-\hu)| + |U_\e-\hu||\hna\hu| \leq C_{11}\e\quad \text{ uniformly on } [0,T]\times\M.
\end{equation}
This gives us 
\begin{equation}
\big|\int_{\OO_\e} \rho (U_\e-u)\cdot \big(U_\e \cdot \na U_\e - \hu \cdot \hna \hu \big) \,\dd x\big| \leq C_{11}\e \int_{\OO_\e} \rho|U_\e-u|\,\dd x.
\end{equation}
In summary,
\begin{equation}\label{three estimate}
\big| \three^\e(t) \big| \leq (C_{10}+C_{11})\e \int_{\OO_\e} \rho|U_\e-u|\,\dd x.
\end{equation}

\noindent
{\bf Estimate for $\four^\e$.} The Laplacian of $V_{1,\hu}=V_{\hu}$ is uniformly bounded by a geometric constant. Hence, by scaling, there is some $C_{12}=C_{12}(\OO)$ independent of $\e$, such that
\begin{equation}\label{four estimate}
\big| \four^\e(t) \big| \leq C_{12} \mu \e \int_{\OO_\e}|U_\e-u|\,\dd x.
\end{equation}

\noindent
{\bf Estimate for $\five^\e$.} As in Bella--Feireisl--Lewicka--Novotn\'{y} \cite{bfln}, a cut-off function $\chi=\chi(\rho) \in C^\infty_c(]0,\infty[)$ is introduced:
\begin{equation}
0 \leq \chi \leq 1, \qquad \chi \equiv 1 \text{ on } [{\lrho}/{2}, 2\urho].
\end{equation}
Then we split $\five^\e$ into two terms:
\begin{align}\label{five split}
\big|\five^\e(t)\big| &\leq \big|\five_{\rm middle}^\e(t)\big| + \big|\five_{\rm ends}^\e(t)\big|\nonumber\\
&:= \Big| \int_{\OO_\e}\chi(\rho)\frac{\rho-\hr}{\hr}  \bigg\{\mu\hD\hu\cdot\huu \nonumber\\
&\quad\quad+ \vis \Big(\hna_{\huu}\hna_{\hu}\log A+\hna_{\huu}\hdv\hu\Big) \bigg\}\,\dd x\Big|\nonumber\\
&\quad + \Big| \int_{\OO_\e}\Big[1-\chi(\rho)\Big]\frac{\rho-\hr}{\hr}  \bigg\{\mu\hD\hu\cdot\huu \nonumber\\
&\quad\quad+ \vis \Big(\hna_{\huu}\hna_{\hu}\log A+\hna_{\huu}\hdv\hu\Big) \bigg\}\,\dd x\Big|.
\end{align}

For $\five_{\rm middle}^\e$, we directly estimate by Cauchy--Schwarz:
\begin{eqnarray}
&& \Big|\chi(\rho)\frac{\rho-\hr}{\hr} \mu\hD\hu\cdot\huu \Big| \leq C_{13} \mu\Big(|\rho-\hr|^2+|U_\e-u|^2\Big),\\&&\Big|\chi(\rho)\frac{\rho-\hr}{\hr}\vis\hna_\huu\hdv\hu\Big|\nonumber\\
&&\qquad\qquad\qquad\qquad\qquad \leq C_{13}\vis\Big(|\rho-\hr|^2+|U_\e-u|^2\Big),\\
&& \Big|\chi(\rho)\frac{\rho-\hr}{\hr} \vis \hna_{\huu}\hna_{\hu}\log A\Big| \nonumber\\
&&\qquad\qquad\qquad\qquad\qquad \leq C_{14}\vis\Big(|\rho-\hr|^2+|U_\e-u|^2\Big).
\end{eqnarray}
Here $C_{13}=C(\|\hu\|_{C^0_tC^2_x}, \lrho)$ and $C_{14}=C(\|\hu\|_{C^0_tC^1_x}, \|\hna\hna\log A\|_{C^0(\M)},\lrho)=C'(\|\hu\|_{C^0_tC^1_x}, \OO,\lrho)$. Thus, in view of Eq.\,\eqref{p1}, we have $C_{15}=C(\lrho,\urho,\|\hu\|_{C^0_tC^2_x},\OO)$ such that
\begin{equation}\label{five middle}
\big|\five_{\rm middle}^\e(t)\big| \leq C_{15} \vis \E^\e \big(\rho, u  |\hr, U_\e\big).
\end{equation}

Next, let us estimate $\five_{\rm middle}^\e$. Again by Cauchy--Schwarz there holds 
\begin{align}
&\bigg|\Big[1-\chi(\rho)\Big]\frac{\rho-\hr}{\hr}  \bigg\{ \mu\hD\hu\cdot\huu + \vis \hna_{\huu}\hdv \hu\bigg\}\bigg|\nonumber\\
&\qquad\qquad\qquad\leq C_{16}\vis\Big[1-\chi(\rho)\Big] \bigg\{\delta^{-1}  |\rho-\hr| + \delta(1+\rho)|U_\e-u|^2 \bigg\},
\end{align}
where $C_{16}=C(\|\hu\|_{C^0_tC^2_x}, \lrho)$ and $\delta>0$ is a small positive constant to be specified. Similarly, there exists $C_{17}=C(\|\hu\|_{C^0_tC^1_x}, \|\hna\hna\log A\|_{C^0(\M)},\lrho)=C'(\|\hu\|_{C^0_tC^1_x}, \OO,\lrho)$ such that 
\begin{eqnarray}
&&\Big|\Big[1-\chi(\rho)\Big]\frac{\rho-\hr}{\hr} \vis \hna_{\huu}\hna_{\hu}\log A \Big|\nonumber\\
&&\qquad\qquad\qquad \leq C_{17}\Big[1-\chi(\rho)\Big] \vis \bigg\{\delta^{-1}  |\rho-\hr| + \delta(1+\rho)|U_\e-u|^2 \bigg\}.
\end{eqnarray}
Here the assumption \eqref{injectivity rad} on the injectivity radius is crucial: together with the $C^{r,\alpha}$ regularity of $\OO$, it guarantees $\|\hna\hna\log A\|_{C^0(\M)} <\infty$ (see Eq.\,\eqref{poisson eq} in Proposition \ref{lemma: dlogA}).  Therefore, by Eq.\,\eqref{p2}, there is a constant $C_{18}=C(\lrho,\urho,\|\hu\|_{C^0_tC^2_x},\OO)$ such that
\begin{align}\label{five end}
\big|\five_{\rm end}^\e(t)\big| 
&\leq C_{18}\delta \vis \E^\e\big(\rho,u|\hr, U_\e\big) + C_{18}\vis  \times\nonumber\\
&\qquad\qquad\times\bigg\{\delta \int_{\OO_\e}\big|\big[1-\chi(\rho)\big](U_\e-u)\big|^2 \,\dd x + \frac{1}{\delta}\int_{\OO_\e}\big|\big[1-\chi(\rho)\big](\rho-\hr)\big|\,\dd x \bigg\}.
\end{align}
Furthermore, thanks to Young's inequality and Eq.\,\eqref{p2},
\begin{align}\label{five remainder}
\int_{\OO_\e}\big|\big[1-\chi(\rho)\big](\rho-\hr)\big|\,\dd x &\leq \urho |\OO_\e| + \frac{1}{\gamma} \int_{\OO_\e} \rho^\gamma\,\dd x + \frac{\gamma-1}{\gamma} |\OO_\e|\leq C_{19}\, \E^\e\big(\rho,u|\hr, U_\e\big),
\end{align}
where $C_{19}=C(\gamma,\lrho,\urho,\OO)$. Here we need the assumption that $\M$ and $\F$ are compact.  Finally, putting together Eqs.\,\eqref{five split}\eqref{five middle} \eqref{five end} and \eqref{five remainder}, one deduces
\begin{align}\label{five estimate}
\big|\five^\e(t)\big| 
&\leq C_{20} \vis\bigg\{ \E^\e\big(\rho,u|\hr, U_\e\big) +  \delta \int_{\OO_\e}|U_\e-u|^2 \,\dd x +\delta^{-1}|\OO_\e|\bigg\},
\end{align}
where $C_{20}=C(\gamma, \lrho,\urho,\|\hu\|_{C^0_tC^2_x},\OO)$. 

\smallskip
{\bf Step 4.}  In view of Eqs.\,\eqref{one estimate}\eqref{two estimate}\eqref{three estimate}\eqref{four estimate} and \eqref{five estimate} from Step 3 above, the relative entropy inequality \eqref{relative entropy ineq, epsilon} now reads
\begin{align}
&\E^\e\big(\rho, u|\hr, U_\e\big)(t)-\E^\e\big(\rho, u|\hr, U_\e\big)(0) + \int_0^t\int_{\OO_\e} \big[\sph(\na u)-\sph(\na U_\e)\big]:(\na u - \na U_\e)\,\dd x\dd t'\nonumber\\
\leq \,& C_{21}\bigg\{\int_{0}^t \E^\e\big(\rho, u|\hr, U_\e\big)(t') \,\dd t' + \e\int_{0}^t\int_{\OO_\e}\rho|U_\e-u|\,\dd x \dd t' + \mu\e\int_{0}^t \int_{\OO_\e}  |U_\e-u|\,\dd t' \dd x \nonumber\\
&\quad + \vis\bigg( \int_0^t \E^\e\big(\rho, u|\hr, U_\e\big)(t')\,\dd t' +\delta \int_0^t\int_{\OO_\e} |U_\e-u|^2\,\dd x \dd t' \bigg)\bigg\}
\end{align}
for almost every $t\in[0,T]$. The constant $C_{21}=C(N,\gamma,\lrho,\urho,p_\infty,\OO,\|\hu\|_{C^0_tC^2_x})$. 

Let us now establish a {\em claim}: For $\e_0,\delta>0$ sufficiently small, there holds
\begin{equation}\label{claim, delta term}
(\e_0+\delta) \int_0^t\int_{\OO_\e} |U_\e-u|^2\,\dd x \dd t' \leq \frac{1}{2\vis C_{21}} \int_0^t\int_{\OO_\e} \big[\sph(\na u)-\sph(\na U_\e)\big]:(\na u - \na U_\e)\,\dd x\dd t'.
\end{equation}
Indeed, for any $w \in W^{1,2}$ vector field on $\OO$ we have  
\begin{align}
\sph(\na w): \na w &= \mu\Big(2\mathbb{D}(\na w) - \frac{2}{N}\dv\, w\,\id\Big): \Big(\mathbb{D}(\na w)-\frac{1}{N}\dv\, w \,\id\Big)+\eta(\dv\, w)^2\nonumber\\
&= 2\mu\bigg|\mathbb{D}(\na w)-\frac{1}{N}\dv\, w \,\id\bigg|^2+\eta(\dv\, w)^2 \nonumber\\
&\geq C(\kappa) \mu \big|\mathbb{D}(\na w)\big|^2 + \kappa(\dv\, w)^2,
\end{align}
as $(2\mathbb{D}(\na w) - \frac{2}{N}\dv\, w\,\id)$ is a symmetric traceless $N\times N$ matrix, and $(\na w - [\mathbb{D}(\na w)-\frac{1}{N}\dv\, w \,\id])$ is the orthogonal projection of $\na w$ onto the annihilator of symmetric traceless $N\times N$ matrices. The last line holds by Cauchy--Schwarz applied to matrices, where $\kappa=\kappa(\mu,\eta, N)>0$; we note that $\eta>0$ is crucial here. Now, as it is assumed that $\OO_\e$ supports the uniform Korn's inequality, we can find a purely geometric constant $C_{22}=C(\OO)$, independent of $\e$, such that
\begin{equation}
\int_{\OO_\e} |w|^2\,\dd x\leq C_{22} \int_{\OO_\e}|\mathbb{D}(\na w)|^2\,\dd x. 
\end{equation}
Therefore, the desired constants $\delta$ and $\e_0$ in Eq.\,\eqref{claim, delta term} exist, which depend only on $C_{22}, C_{21}$, $\eta$,$\mu$ and $N$. Thus the {\em claim} follows.  

\smallskip 
{\bf Step 5.} Finally, our arguments in Step 4 shows that, 
for almost every $t\in[0,T]$, \begin{align}
&\E^\e\big(\rho, u|\hr, U_\e\big)(t)-\E^\e\big(\rho, u|\hr, U_\e\big)(0) + \frac{1}{2}\int_0^t\int_{\OO_\e} \big[\sph(\na u)-\sph(\na U_\e)\big]:(\na u - \na U_\e)\,\dd x\dd t'\nonumber\\
&\qquad\qquad\qquad\leq  C_{23}\bigg\{\int_{0}^t \E^\e\big(\rho, u|\hr, U_\e\big)(t') \,\dd t' + \e\int_{0}^t\int_{\OO_\e}\rho|U_\e-u|\,\dd x \dd t'\bigg\},
\end{align}
where $C_{23}=C(N,\mu,\eta,\gamma,\lrho,\urho,p_\infty,\OO,\|\hu\|_{C^0_tC^2_x})$ and $\e<\e_0$ as in the {\em claim} \eqref{claim, delta term}. In addition, notice that for some $C_{24}=C(\gamma,\lrho,\urho)$, we have 
\begin{align}\label{yyy}
&\int_{0}^t\int_{\OO_\e}\rho|U_\e-u|\,\dd x \dd t' \nonumber\\
\leq\,& 2\bigg\{\int_0^t \int_{\OO_\e}\rho\,\dd x\dd t' + \int_0^t\int_{\OO_\e} \rho|U_\e-u|^2\,\dd x\dd t'\bigg\}\nonumber\\
\leq \,& 2\int_0^t\E^\e\big(\rho,u|\hr,U_\e\big)(t')\,\dd t' + 2t|\OO_\e| + \frac{2}{\gamma}\int_0^t \int_{\OO_\e} \big[1-\chi(\rho)\big]\rho^\gamma\,\dd x\dd t' + \frac{2(\gamma-1)}{\gamma}\,t|\OO_\e|^{\frac{\gamma-1}{\gamma}}\nonumber\\
\leq\,& C_{24} \int_0^t\E^\e\big(\rho,u|\hr,U_\e\big)(t')\,\dd t' + 4t |\OO_\e| \qquad \text{ whenever } |\OO_\e|\leq 1, \, t \in [0,T].
\end{align}
Indeed, the second line follows from Cauchy--Schwarz, the third line from Young's inequality, H\"{o}lder's inequality and the cut-off function $\chi=\chi(\rho)$ as in Step 3 (similar to Eq.\,\eqref{five remainder}), and the last line from Eq.\,\eqref{p2}. Therefore,
\begin{align}\label{final estimate for NS convergence}
&\E^\e\big(\rho,u|\hr,U_\e\big)(t) +  \frac{1}{2}\int_0^t\int_{\OO_\e} \big[\sph(\na u)-\sph(\na U_\e)\big]:(\na u - \na U_\e)\,\dd x\dd t'\nonumber\\
&\qquad\qquad\leq \E^\e\big(\rho,u|\hr,U_\e\big)(0) + C_{25}\int_0^t \E^\e\big(\rho,u|\hr,U_\e\big)(t')\,\dd t' + C_{25}\e|\OO_\e|
\end{align}
for almost all $t\in[0,T]$, where $C_{25}$ depends  on $N$, $\mu$, $\eta$, $\gamma$, $\lrho$, $\urho$, $p_\infty$, $\Lambda$, the lifespan $T$ of the solutions to Eqs.\,\eqref{continuity eq on Oe}--\eqref{initial data for Oe} and Eqs.\,\eqref{limiting continuity eq, NS} -- \eqref{limiting initial data, NS}, as well as the geometry of $\OO$, but is independent of $\e$. The proof % of Theorem \ref{thm: convergence of NS} 
is now complete, in view of the Gr\"{o}nwall's inequality.     \end{proof}

An immediate corollary to Theorem \ref{thm: convergence of NS} is the following ``weak-strong stability'' theorem: the classical solution $(\hr,\hu)$, whenever it exists, is stable in the class of suitable weak solutions of compressible Navier--Stokes equations:
\begin{corollary}\label{Cor: weak strong unique, NS}
Let $\OO$, $(\rho^\e, u^\e)$ and $(\hr,\hu)$ be as in Theorem \ref{thm: convergence of NS}. Assume at $t=0$ that
\begin{equation}\label{convergence}
\dashint_{\OO_\e} \bigg\{\frac{1}{2} \rho^\e|u^\e-\hu|^2+\Big[H(\rho^\e)-H(\hr)-H'(\hr)(\rho^\e-\hr)\Big] \bigg\}\,\dd x  \longrightarrow 0 \qquad \text{ as } \e \map 0.
\end{equation}
Then the convergence in \eqref{convergence} holds for almost every $t\in[0,T]$.
\end{corollary}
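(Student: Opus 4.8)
The plan is to deduce the corollary immediately from the quantitative estimate of Theorem \ref{thm: convergence of NS}. First I would note that, by the definition \eqref{def of E} of the relative entropy functional applied with $r=\hr$ and $U=\hu$, the averaged integral in \eqref{convergence} is exactly the normalised relative entropy:
\begin{equation*}
\dashint_{\OO_\e} \bigg\{\frac{1}{2}\rho^\e|u^\e-\hu|^2 + \Big[H(\rho^\e)-H(\hr)-H'(\hr)(\rho^\e-\hr)\Big]\bigg\}\,\dd x = \frac{1}{|\OO_\e|}\,\E^\e\big(\rho^\e, u^\e\,\big|\,\hr,\hu\big)(t).
\end{equation*}
Hence the hypothesis \eqref{convergence} is precisely the statement that $|\OO_\e|^{-1}\E^\e(\rho^\e,u^\e|\hr,\hu)(0)\to 0$ as $\e\map 0^+$, and the conclusion to be proved is that the same quantity tends to $0$ for a.e. $t\in[0,T]$.

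Next I would invoke Theorem \ref{thm: convergence of NS} directly: for a.e. $t\in\,]0,T]$ one has
\begin{equation*}
0\leq \frac{1}{|\OO_\e|}\,\E^\e\big(\rho^\e,u^\e\,\big|\,\hr,\hu\big)(t) \leq C_0\bigg\{\e + \frac{1}{|\OO_\e|}\,\E^\e\big(\rho^\e,u^\e\,\big|\,\hr,\hu\big)(0)\bigg\},
\end{equation*}
where $C_0$ is independent of $\e$. The lower bound $\E^\e\geq 0$ here is justified by the pointwise nonnegativity of the integrand of $\E^\e$: it is bounded below by $C_1(K)\big(|u^\e-\hu|^2+|\rho^\e-\hr|^2\big)$ on the set where $\rho^\e$ ranges in a fixed compact subset of $]0,\infty[$ by \eqref{p1}, and by $C_3(K,K')\big(1+\rho^\e|u^\e-\hu|^2+(\rho^\e)^\gamma\big)$ elsewhere by \eqref{p2}, both of which are nonnegative. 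Letting $\e\map 0^+$, the right-hand side converges to $C_0\cdot 0=0$ by the hypothesis \eqref{convergence} together with $\e\map 0^+$; by the squeeze theorem, the left-hand side converges to $0$ as well, which is exactly \eqref{convergence} at time $t$, for a.e. $t\in[0,T]$.

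Since the argument is a one-line consequence of Theorem \ref{thm: convergence of NS} once this bookkeeping is in place, there is no genuine obstacle; the only points demanding a word of care are the identification of the slashed integral in \eqref{convergence} with $|\OO_\e|^{-1}\E^\e$ and the nonnegativity of the relative entropy, and both are routine given \eqref{def of E}, \eqref{p1} and \eqref{p2}.
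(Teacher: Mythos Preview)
Your proposal is correct and matches the paper's own treatment: the corollary is presented there as an immediate consequence of Theorem~\ref{thm: convergence of NS}, with no separate proof given. Your only extra ingredient---the nonnegativity of $\E^\e$---is standard (and in fact follows more directly from the convexity of $H$, since $H''(r)=p'(r)/r>0$), so the argument is complete.
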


\subsection{Dimension reduction limit and the vanishing viscosity limit of the Navier--Stokes system} 

In this subsection we establish the second main result of the paper, which answers the question {\bf (P--Euler)} in the affirmative. In this case we do not need the assumptions on the strict positivity of $\eta$ or the uniform Korn's inequality:

\begin{theorem}\label{thm: Euler convergence}
	Let $\OO \cong \M \times \F \subset \R^N$ be a fluid domain as in Sect.\,$1$. Let $(\rho^\e,u^\e):[0,T]\times \OO_\e \map \R_+ \times T\OO_\e$ be a family of suitable weak solutions to the Navier--Stokes equations \eqref{continuity eq on Oe}--\eqref{initial data for Oe}, indexed by $\e \map 0^+$, whose pressure term $p$ satisfies \eqref{assumption on p}. Suppose that Eqs.\,\eqref{limiting continuity eq, NS}--\eqref{limiting initial data, NS} have a classical solution $(\hr, \hu):[0,T]\times \M \map \R_+ \times T\M$, which satisfies
	\begin{equation}
	\Lambda:={\rm ess \, sup}_{t\in[0,T]}\, \big\|\hu(t,\cdot)\big\|_{C^2(\M)} < \infty,
	\end{equation}
as well as
\begin{equation}\label{assumption on rho hat'}
0<\lrho \leq \hr(t,x) \leq \urho <\infty\qquad \text{ on } [0,T]\times\M
\end{equation}
for some constants $\lrho,\urho$.  Then the following holds: There exists a constant $C_0$, depending only on $\Lambda$, $T$ and the geometry of $\OO$, such that for {\it a.e.} $t\in ]0,T]$, we have
\begin{equation}
\frac{1}{|\OO_\e|} \E^\e\big(\rho^\e, u^\e|\hr,\hu\big)(t) \leq C_0 \bigg\{\mu+\eta+\e + \frac{1}{|\OO_\e|} \E^\e\big(\rho^\e, u^\e|\hr,\hu\big)(0) \bigg\}.
\end{equation}

\end{theorem}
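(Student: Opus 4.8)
The plan is to follow the structure of the proof of Theorem \ref{thm: convergence of NS} almost verbatim, but carry the viscosity coefficients $\mu,\eta$ explicitly through the estimates instead of absorbing them into a single constant. We again set $(\rho,u)\equiv(\rho^\e,u^\e)$, take $U_\e$ the canonical lift of $\hu$, and write down the relative entropy inequality \eqref{relative entropy ineq, epsilon}. Applying Steps 1 and 2 of the previous proof word for word (these are purely algebraic manipulations using the limiting momentum equation \eqref{limiting momentum eq, NS} and Proposition \ref{propn: conservation law}, and do not involve $\eta>0$ or Korn's inequality), we arrive at the decomposition $\RR^\e = \one^\e+\two^\e+\three^\e+\four^\e+\five^\e$ exactly as in Eq.\,\eqref{Re, 3}.

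Next I would re-examine the estimates of Step 3. The bounds \eqref{one estimate} and \eqref{two estimate} for $\one^\e$ and $\two^\e$ are untouched; the bounds \eqref{three estimate} and \eqref{four estimate} for $\three^\e$ and $\four^\e$ already carry an explicit $\e$ (and $\four^\e$ already carries an explicit $\mu\e$), so they cause no trouble. The key point is $\five^\e$: tracking constants, every term in $\five^\e$ carries a factor $\mu$ or $\vis=\eta+\frac{N-2}{N}\mu$, so the estimate \eqref{five estimate} in fact gives $|\five^\e(t)|\le C_{20}\vis\{\E^\e+\delta\int_{\OO_\e}|U_\e-u|^2\,\dd x+\delta^{-1}|\OO_\e|\}$ with the $\vis$ prefactor explicit. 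The crucial change is that we \emph{no longer} invoke the uniform Korn's inequality to absorb the $\delta\int|U_\e-u|^2$ term into the dissipation; instead, since that term already carries the small factor $\vis=\mu+\frac{N-2}{N}\mu+\eta$, we simply estimate $\int_{\OO_\e}\rho|U_\e-u|^2\,\dd x\le C\E^\e+C|\OO_\e|$ using the coercivity \eqref{p2} and \eqref{p1} (as in Eq.\,\eqref{yyy}), and likewise $\int_{\OO_\e}|U_\e-u|^2\,\dd x$ is controlled by the same by splitting with the cut-off $\chi(\rho)$ and using \eqref{p2}. This turns the problematic term into $(\mu+\eta)(C\E^\e+C|\OO_\e|)$, which is now acceptable since we want a $(\mu+\eta)$ on the right-hand side of the conclusion.

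After these modifications, the dissipation term $\int_0^t\int_{\OO_\e}[\sph(\na u)-\sph(\na U_\e)]:(\na u-\na U_\e)\,\dd x\dd t'$ on the left of \eqref{relative entropy ineq, epsilon} can simply be dropped (it is nonnegative, since $\sph$ is a positive-definite linear map on symmetric matrices when $\mu>0$, $\eta>(2/N-2)\mu$, with no need to bound it from below or use Korn). Collecting everything and keeping in mind $|\OO_\e|\le 1$ for $\e$ small, one reaches
\begin{equation*}
\E^\e\big(\rho,u|\hr,U_\e\big)(t)\le \E^\e\big(\rho,u|\hr,U_\e\big)(0)+C\int_0^t\E^\e\big(\rho,u|\hr,U_\e\big)(t')\,\dd t'+C(\mu+\eta+\e)|\OO_\e|,
\end{equation*}
with $C$ depending only on $N$, $\gamma$, $\lrho$, $\urho$, $p_\infty$, $\Lambda$, $T$ and the geometry of $\OO$ but \emph{not} on $\mu$, $\eta$ or $\e$ (this independence is the whole point, and is where one must be careful: each appearance of $\mu$ or $\vis$ in Step 3 must be pulled out rather than hidden in a constant). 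Gr\"{o}nwall's inequality then yields $\E^\e(\rho,u|\hr,U_\e)(t)\le C_0(\mu+\eta+\e)|\OO_\e|$, and replacing $U_\e$ by $\hu$ costs another $O(\e)|\OO_\e|$ since $\|U_\e-\hu\|_{C^0}\le C_9\e$ by \eqref{Ue-u}. Dividing by $|\OO_\e|$ gives the claim. The main obstacle is purely bookkeeping: verifying that in the original proof every term that was estimated by a constant times $\E^\e$ or $\e|\OO_\e|$ either genuinely has no $\mu,\eta$ dependence (so it contributes to the $C_0\cdot\E^\e$ or $C_0\e$ part) or carries an explicit $\mu$ or $\vis$ factor (so it contributes to the $C_0(\mu+\eta)$ part) — and in particular that the only place the old proof used $\eta>0$ and Korn's inequality, namely absorbing $\delta\int|U_\e-u|^2$ into the dissipation, can now be bypassed precisely because that term already carries the small coefficient $\vis$.
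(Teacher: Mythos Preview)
Your approach contains a genuine gap. You claim that $\int_{\OO_\e}|U_\e-u|^2\,\dd x$ can be controlled by $C\E^\e+C|\OO_\e|$ via the cut-off $\chi(\rho)$ and the coercivity bounds \eqref{p1}--\eqref{p2}. This fails on the near-vacuum region $\{\rho<\lrho/2\}$: there \eqref{p2} gives only that the integrand of $\E^\e$ dominates $C_3(1+\rho|U_\e-u|^2+\rho^\gamma)$, which controls $\int\rho|U_\e-u|^2$ and the measure $|\{\rho<\lrho/2\}|$, but \emph{not} $\int_{\{\rho<\lrho/2\}}|U_\e-u|^2$ without the $\rho$ weight. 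A weak solution may have $\rho$ vanishing on a set of positive measure with $|u|$ large there, and the relative entropy sees nothing. The same objection applies to the $\mu\e\int_{\OO_\e}|U_\e-u|\,\dd x$ term coming from $\four^\e$. In the proof of Theorem~\ref{thm: convergence of NS} these unweighted terms were absorbed into the dissipation via Korn's inequality, which is precisely the tool you are trying to avoid.

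The paper sidesteps this problem by taking a different route. Despite the reference to Eqs.\,\eqref{limiting continuity eq, NS}--\eqref{limiting initial data, NS} in the statement, the proof in fact uses the \emph{Euler} limiting system \eqref{limiting continuity eq, EULER}--\eqref{limiting initial data, EULER} for $(\hr,\hu)$ (the reference in the hypothesis appears to be a typo). With the viscosity-free momentum equation \eqref{limiting momentum eq, EULER} in place of \eqref{limiting momentum eq, NS}, Step~1 produces the analogue of \eqref{Re, 1} \emph{without} the fourth line, so there is no $\five^\e$ and hence no unweighted $\int|U_\e-u|^2$ term to worry about. The stress-tensor contribution $\int_{\OO_\e}\sph(\na U_\e):\na(U_\e-u)\,\dd x$ is then \emph{not} integrated by parts (so $\four^\e$ never appears either); instead it is split by Cauchy--Schwarz into $\tfrac{1}{2}\int\sph(\na w):\na w$ with $w=U_\e-u$, which is absorbed by the nonnegative dissipation on the left, plus a term quadratic in $\na U_\e$ alone, which is bounded by $C(\mu+\eta)|\OO_\e|$ since $U_\e$ is the smooth canonical lift of $\hu$. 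This is how the $(\mu+\eta)$ enters the final estimate, and it requires neither Korn nor any control of unweighted $|U_\e-u|^2$.
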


\begin{proof}
	The proof is mostly analogous to, and in many places simpler than, that of Theorem \ref{thm: convergence of NS}. Let us only emphasise  the differences. 
	
	First, by the same arguments as in Step 1 of the proof of Theorem \ref{thm: convergence of NS}, we can deduce
\begin{align}\label{Re 1 for Euler}
\RR^\e(\rho, u, \hr, U_\e) &= \int_{\OO_\e} \dv\, U_\e \bigg\{p(\hr)-p(\rho)-(\hr-\rho) p'(\hr) \bigg\}\,\dd x \nonumber\\
&\quad - \int_{\OO_\e} \rho \na U_\e : \big(U_\e-u\big) \otimes \big(U_\e-u\big) \,\dd x\nonumber\\
&\quad +  \int_{\OO_\e} \rho \bigg\{\,\p_t \big(U_\e-\hu\big) + U_\e \cdot \na U_\e - \hu \cdot \hna \hu\bigg\}\cdot(U_\e-u) \,\dd x\nonumber\\
&\quad + \int_{\OO_\e} \sph(\na U_\e): \na(U_\e -u)\,\dd x.
\end{align}
It is the same as Eq.\,\eqref{Re, 1}, except that the viscosity term is absent. $U_\e$ is again the canonical lift (see Definition \ref{def: canonical lift in O}) of $\hu$ to $T\OO_\e$, and Proposition \ref{propn: conservation law} is utilised in the derivation of Eq.\,\eqref{Re 1 for Euler}.

	Next, we notice that the first three terms are identical to $\one^\e$, $\two^\e$ and $\three^\e$ in Eq.\,\eqref{Re, 3}. Thus, by Eqs.\,\eqref{one estimate}\eqref{two estimate} and \eqref{three estimate}, we have
	\begin{align}\label{euler conv 1}
	&\E^\e\big(\rho, u|\hr, U_\e\big)(t)-\E^\e\big(\rho, u|\hr, U_\e\big)(0) + \int_0^t\int_{\OO_\e} \big[\sph(\na u)-\sph(\na U_\e)\big]:(\na u - \na U_\e)\,\dd x\dd t'\nonumber\\
	&\qquad\qquad\leq C_{26} \bigg\{\int_{0}^t \E^\e\big(\rho, u|\hr, U_\e\big)(t') \,\dd t'+\e\int_0^t\int_{\OO_\e}\rho|U_\e-u|\,\dd x\dd t'\bigg\}\nonumber\\
	&\qquad\qquad\qquad\quad+\int_0^t \int_{\OO_\e} \sph(\na U_\e): \na(U_\e -u)\,\dd x\dd t',
	\end{align}
from some $C_{26}=C(\Lambda,\urho,\lrho,\gamma,p_\infty,N,\OO)$.
	
	In addition, the second term on the right-hand side of Eq.\,\eqref{euler conv 1} can be estimated as Eq.\,\eqref{yyy}, reproduced below:
	\begin{align}\label{euler conv 2}
&\int_0^t\int_{\OO_\e}\rho|U_\e-u|\,\dd x\dd t' \nonumber\\
&\qquad\leq C_{27} \int_0^t\E^\e\big(\rho,u|\hr,U_\e\big)(t')\,\dd t' + 4t |\OO_\e| \quad\text{ whenever } |\OO_\e|\leq 1, \, t \in [0,T],
\end{align}	 
where $C_{27}=C(\gamma,\lrho,\urho)$. 
	
	Finally, we can dominate the last term in Eq.\,\eqref{euler conv 1} by the left-hand side. Invoking again the following identity in Step 4, proof of Theorem \ref{thm: convergence of NS}:
	\begin{equation}\label{S identity}
	\sph(\na w): \na w = 2\mu\Big|\mathbb{D}(\na w) - \frac{1}{N}\dv\, w\,\id\Big|^2+\eta(\dv\, w)^2,
	\end{equation}
where we take $w=U_\e-u$ here. Thus, we can bound
\begin{align}\label{euler conv 3}
&\Big|\int_0^t \int_{\OO_\e} \sph(\na U_\e): \na(U_\e -u)\,\dd x\dd t'\Big|\nonumber\\
=\,& \Big|\int_0^t \int_{\OO_\e}  \bigg\{\sqrt{2\mu}\Big(\mathbb{D}(\na U_\e)-\frac{1}{N}\dv\, U_\e\,\id\Big): \sqrt{\frac{\mu}{2}}\Big(\mathbb{D}(\na w)- \frac{1}{N}\dv\, w\,\id \Big) \nonumber\\
&\qquad + \sqrt{2\eta}\,\dv\,U_\e\sqrt{\frac{\eta}{2}}\,\dv \, w \bigg\}\,\dd x\dd t'\Big|\nonumber\\
\leq\,& \int_0^t \int_{\OO_\e} \bigg\{\mu\, \big|\mathbb{D}(\na w)- \frac{1}{N}\dv\, w\,\id\big|^2 + \frac{\eta}{2} (\dv\, w)^2 \bigg\}\,\dd x\dd t' \nonumber\\
&\qquad +  \int_0^t \int_{\OO_\e} \bigg\{2\mu \big|\mathbb{D}(\na U_\e) - \frac{1}{N}\dv\, U_\e\,\id\big|^2 + 2\eta (\dv\, U_\e)^2 \bigg\}\,\dd x \dd t'\nonumber\\
=\,& \int_0^t \int_{\OO_\e} \frac{\sph(\na w): \na w}{2}\,\dd x \dd t' + 2\int_0^t \int_{\OO_\e} \bigg\{\mu \big|\mathbb{D}(\na U_\e) - \frac{1}{N}\dv\, U_\e\,\id\big|^2 + \eta (\dv\, U_\e)^2 \bigg\}\,\dd x \dd t'.
\end{align}
Indeed, in the second line we use the definition of $\sph$, in the third line Cauchy--Schwarz, and the  final line Eq.\,\eqref{S identity}. However, $U_\e$ is the canonical lift of the strong solution $\hu$, which verifies $\|U_\e(t,\cdot)\|_{W^{1,2}(\OO_\e)}\leq C_{28}:=C(\Lambda, \OO)$. Hence
\begin{align}\label{euler conv 4}
&\int_0^t \int_{\OO_\e} \bigg\{\mu \big|\mathbb{D}(\na U_\e) - \frac{1}{N}\dv\, U_\e\,\id\big|^2 + \eta (\dv\, U_\e)^2 \bigg\}\,\dd x \dd t'\leq  C_{29}(\mu+\eta)|\OO_\e|
\end{align}
for some $C_{29}=C(T,\Lambda,\OO,N)$. Putting together Eqs.\,\eqref{euler conv 1}\eqref{euler conv 2}\eqref{euler conv 3} and \eqref{euler conv 4}, we obtain
\begin{align}
&\E^\e\big(\rho,u|\hr,U_\e\big)(t) + \frac{1}{2}\int_0^t\int_{\OO_\e} \big[\sph(\na u)-\sph(\na U_\e)\big]:(\na u - \na U_\e)\,\dd x\dd t'\nonumber\\
&\qquad\leq \E^\e\big(\rho,u|\hr,U_\e\big)(0) + C_{30}\bigg\{(\e+\mu+\eta)|\OO_\e| + \int_{0}^t \E^\e\big(\rho, u|\hr, U_\e\big)(t') \,\dd t' \bigg\},
\end{align}
where  $C_{30}=C(\Lambda,\urho,\lrho,\gamma,p_\infty,N,\OO,T)$, independent of the parameter $\e$ and suitable weak solutions $(\rho^\e, u^\e)$. Now, an application of the Gr\"{o}nwall's inequality completes the proof.   \end{proof}

We also have the following weak-strong stability result, whose proof is immediate:
\begin{corollary}\label{Cor: weak strong unique, Euler}
Let $\OO$, $(\rho^\e, u^\e)$ and $(\hr,\hu)$ be as in Theorem \ref{thm: convergence of NS}. Assume
\begin{equation}\label{convergence Euler}
\dashint_{\OO_\e} \bigg\{\frac{1}{2} \rho^\e|u^\e-\hu|^2+\Big[H(\rho^\e)-H(\hr)-H'(\hr)(\rho^\e-\hr)\Big] \bigg\}\,\dd x  \longrightarrow 0 \qquad \text{ as } \e+\lambda+\mu \map 0^+
\end{equation}
at $t=0$. Then the convergence in \eqref{convergence Euler} holds for almost every $t\in[0,T]$.
\end{corollary}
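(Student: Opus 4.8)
The claimed convergence is an immediate consequence of the quantitative bound in Theorem \ref{thm: Euler convergence}, and the plan is to make this translation precise. First I would observe that, by the definition \eqref{def of E} of the relative entropy functional (transplanted to $\OO_\e$) and the normalisation $\dashint_{\OO_\e}=|\OO_\e|^{-1}\int_{\OO_\e}$, the quantity occurring in \eqref{convergence Euler} at a time $t$ is exactly
\begin{equation*}
\dashint_{\OO_\e} \bigg\{\frac{1}{2}\rho^\e|u^\e-\hu|^2 + \Big[H(\rho^\e)-H(\hr)-H'(\hr)(\rho^\e-\hr)\Big]\bigg\}\,\dd x = \frac{1}{|\OO_\e|}\,\E^\e\big(\rho^\e,u^\e|\hr,\hu\big)(t).
\end{equation*}
Moreover this quantity is nonnegative: the kinetic part is manifestly so, while $H(\rho^\e)-H(\hr)-H'(\hr)(\rho^\e-\hr)\geq 0$ by convexity of $H$, which follows from $H''(r)=p'(r)/r>0$ (see \eqref{H''} together with \eqref{assumption on p}). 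Hence the one-sided limit asserted in \eqref{convergence Euler} is meaningful both at $t=0$ and for a.e.\ $t$.

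Next I would simply invoke Theorem \ref{thm: Euler convergence}, which under exactly the present hypotheses on $(\rho^\e,u^\e)$ and $(\hr,\hu)$ furnishes a constant $C_0$ depending only on $\Lambda$, $T$ and the geometry of $\OO$ --- in particular \emph{independent} of $\e$, $\mu$ and $\eta$ --- such that, for a.e.\ $t\in {]0,T]}$,
\begin{equation*}
\frac{1}{|\OO_\e|}\,\E^\e\big(\rho^\e,u^\e|\hr,\hu\big)(t) \leq C_0\bigg\{\mu+\eta+\e + \frac{1}{|\OO_\e|}\,\E^\e\big(\rho^\e,u^\e|\hr,\hu\big)(0)\bigg\}.
\end{equation*}
Letting $\e+\mu+\eta\map 0^+$ --- the parameter written $\lambda$ in \eqref{convergence Euler} being the bulk viscosity $\eta$ --- the first three terms on the right vanish trivially, and the last term vanishes by the assumed convergence \eqref{convergence Euler} at $t=0$. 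Therefore the right-hand side tends to $0$, and with it the left-hand side, for almost every $t\in[0,T]$; rewriting the left-hand side back in the form of \eqref{convergence Euler} via the identity above gives the assertion.

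There is no genuine obstacle here; the corollary is a direct reformulation of Theorem \ref{thm: Euler convergence} in the language of \eqref{convergence Euler}, so the only points requiring care are bookkeeping: matching the averaged integral with $|\OO_\e|^{-1}\E^\e$, checking nonnegativity of the integrand so that the limit ``$\longrightarrow 0$'' is well posed, and recording that the constant in Theorem \ref{thm: Euler convergence} does not depend on $\e,\mu,\eta$. One may also note that, although the proof of Theorem \ref{thm: Euler convergence} itself proceeds through the canonical lift $U_\e$ of $\hu$ and then passes $\e\map 0^+$ using $\|U_\e-\hu\|_{C^0(\M)}\leq C(\OO)\,\e$, this passage is already absorbed into the statement of that theorem, so it may be used here as a black box.
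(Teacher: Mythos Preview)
Your proposal is correct and matches the paper's approach exactly: the paper states only that the proof ``is immediate'' from Theorem~\ref{thm: Euler convergence}, and you have spelled out precisely that immediacy, including the identification of the averaged integral with $|\OO_\e|^{-1}\E^\e$, the nonnegativity of the integrand, and the typo $\lambda\leadsto\eta$. There is nothing to add.
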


\section{Further Remarks}
We conclude the paper by several remarks:

1. We have assumed in Sect.\,1 that  $\F_x \perp T_x\M$ for each $x\in\M$, {\it i.e.}, the fibres of $\OO_\e$ (which shrink to zero as $\e\map 0^+$) are perpendicular to the base manifold. This can be relaxed  by only requiring {\em transversality}, namely Eq.\,\eqref{transversality}, as long as the union of the fibre boundaries $\bigcup_{x\in\M}\p\F_x$ still forms  a $C^{r,\alpha}$ hypersurface in $\R^N$, with $r \geq 2$ and $\alpha\in]0,1[$. Under such assumptions, Theorems \ref{thm: convergence of NS} and \ref{thm: Euler convergence} remain valid. This is because the fluid domain $\OO$ can be transformed to the one with perpendicular fibres via a $C^{r,\alpha}$ diffeomorphism, which leaves invariant the proof of convergence. A possible physical model is a thin, ``slanted'' nozzle.

2. Our results may apply to more general fluid domains  than product manifolds, {\it i.e.},  trivial fibre bundles. For example, consider a ``twisted circular nozzle'', namely $\OO = \sph^1 \times D^2/\sim$, where $\sim$ is the relation obtained by changing the direction of the diameter of $D^2$ by $180^\circ$ as it travels along $\sph^1$ once. Thus the fluid boundary $\fs = $ M\"{o}bius strip.  Our analysis still holds, provided that the divergence on $\OO$ is understood as being defined on each orientable chart, and that the uniform Korn's inequality remain valid.

3. In this paper we have discussed the convergence from suitable weak solutions to the classical limiting  solutions, under the hypothesis that the classical solutions exist.   In fact, the existence of solutions --- weak or strong --- to Eqs.\,\eqref{limiting continuity eq, NS}--\eqref{limiting initial data, NS}, Eqs.\,\eqref{limiting continuity eq, EULER}--\eqref{limiting initial data, EULER}, or more generally,   multi-dimensional systems of conservation/balance laws, remains open in the large; {\it cf.} Dafermos \cite{d}. %For other studies on weak-strong uniqueness of the fluid dynamical and continuum mechanical PDEs, see Dafermos \cite{d2}, Wiedemann \cite{w} and many others.

4. One of the geometric assumptions of Theorem \ref{thm: convergence of NS} is the uniform Korn's inequality in the limit $\e\map 0^+$, namely Eq.\,\eqref{uniform korn ineq}. It is not void; for example, it is valid for the models of longitudinal nozzles with cross-sections and the circular nozzles (see Sect.\,5 in \cite{bfln}). More generally, it is valid for a family of thin shells around smooth hypersurfaces; {\it cf.} Lewicka--M\"{u}ller \cite{lm}; also see  Boulkhemair--Chakib \cite{bc} and Ruiz \cite{ruiz}, among many others. The existence or absence of uniform Korn's inequality in the dimension reduction limit, {\it i.e.}, along with the Gromov-Hausdorff convergence of Riemannian manifolds $\M_\e \map \M_0$ so that $\dim \M_\e > \dim \M_0$, remains an interesting problem for further investigation.

\bigskip
\noindent
{\bf Acknowledgement}.
Part of this work has been done during the author's stay as a CRM--ISM post-doctoral fellow at the Centre de Recherches Math\'{e}matiques, Universit\'{e} de Montr\'{e}al and the Institut des Sciences Math\'{e}matiques. Siran Li would like to thank these institutions for their hospitality.

\end{document}